\documentclass[preprint,12pt]{elsarticle}
\usepackage{graphicx} 
\usepackage{amssymb}
\usepackage{amsthm}
\usepackage{graphics}
\usepackage{epsfig}
\usepackage{amssymb}
\usepackage{hyperref}
\usepackage{graphicx}
\usepackage{subfigure}
\usepackage{float}
\usepackage{mathrsfs}
\usepackage{color}
\usepackage[dvipsnames]{xcolor}
\usepackage[svgnames]{xcolor}
\usepackage[x11names]{xcolor}
\usepackage{tikz-network}
\usepackage{tikz}
\usepackage{tikz,pgfplots}
\usetikzlibrary{positioning, arrows.meta}
\usetikzlibrary{patterns}

\usepackage[ruled,vlined]{algorithm2e}
\usepackage{caption}
\DeclareCaptionLabelFormat{cont}{#1~#2\alph{ContinuedFloat}}
\usepackage{fullpage}
\usepackage{amsmath,bbm}
\usepackage{amsfonts}
\usepackage{enumerate}
\usepackage{enumitem} 
\usepackage{tikz}
\usepackage{array}
\usepackage{bigdelim}
\usepackage{nicematrix}
\usepackage{mathtools}
\usepackage{blkarray, bigstrut}
\usepackage{gauss}
\usepackage{xcolor}
\usepackage{tabularray}
\UseTblrLibrary{booktabs}

\definecolor{sayanta}{HTML}{CC5500}

\newcommand{\pf}{\noindent {\bf Proof: }}
\newtheorem*{theoremaux}{Theorem \theoremauxnum}
\gdef\theoremauxnum{1}

\newtheorem{lemma}{\bf Lemma}[section]
\newtheorem{theorem}{\bf Theorem}[section]

\newtheorem{proposition}[lemma]{\bf Proposition}

\newtheorem{remark}{\bf Remark}[section]
\theoremstyle{definition}
\newtheorem{example}{\bf Example}[section]

\journal{~}

\begin{document}

\begin{frontmatter}
\title{A generalization of the sum of element orders function}

\author[1]{Sayanta Chatterjee\corref{cor1}}
\ead{sayanta.math@gmail.com }

\author[1]{Angsuman Das}
\ead{angsuman.maths@presiuniv.ac.in}

\author[1]{Sauvik Poddar}
\ead{sauvikpoddar1997@gmail.com}

\address[1]{Department of Mathematics, Presidency University, 86/1 College Street, Kolkata 700073, India\\}
\cortext[cor1]{Corresponding author}

%

\begin{abstract}
In this paper, for a finite group $G$ and a positive integer $k$, we introduce the function $$\psi''_k(G)=\dfrac{\psi_k(G)}{|G|^{k+1}}, \quad \text{where} \quad \psi_k(G)=\sum_{g \in G}o(g)^k,$$ as the $k$-th generalization of the function $\psi''$, introduced by M. T{\u{a}}rn{\u{a}}uceanu. Different thresholds for cyclic, abelian, nilpotent and (super)solvable groups in terms of $k$ are obtained.
It is shown that these thresholds often outperform similar thresholds derived from other parameters like $\psi'$ and $\psi''$. Moreover, considering $\psi''_k$ as a $(0,1]$-valued function, we study its injectivity, surjectivity and density.
\end{abstract}

\begin{keyword}
    finite groups \sep nilpotent groups \sep solvable groups \sep density 
	\MSC[2020] 20D60, 20E34, 05E16
\end{keyword}
\end{frontmatter}

\section{Introduction}
Since its introduction in \cite{amiri2009sums} by Amiri et al., the sum of element orders $\psi(G)\coloneqq\sum_{g\in G}o(g)$, has played a prominent role in finite group theory. A foundational result, established in \cite{amiri2009sums}, asserts that if $G$ is a group of order $n$, then $\psi(G) \le \psi(\mathbb{Z}_n)$, with equality holding if and only if $G$ is cyclic. This naturally prompted the definition of the normalized function $\psi'(G)\coloneqq\psi(G)/\psi(\mathbb{Z}_n)$ in \cite{herzog_longobardi_exact}. A further related invariant, $\psi''(G)\coloneqq\frac{\psi(G)}{|G|^2}$, was introduced in \cite{tuarnuauceanu_israel} by T{\u{a}}rn{\u{a}}uceanu. In fact, this parameter can be interpreted as the probability of a particular event occurring in a finite group (see Remark \ref{probab-remark}). One of the principal applications of $\psi'(G)$ and $\psi''(G)$ is the derivation of sharp thresholds that guarantee the nilpotency or (super)solvability of $G$.

Independently, Amiri and Amiri \cite{jafarian2014sum} generalized $\psi(G)$ to $\psi_k(G)\coloneqq\sum_{g\in G}o(g)^k$, which sums the $k$-th powers of the element orders, and proved the analogous inequality $\psi_k(G) \le \psi_k(\mathbb{Z}_n)$ for $|G|=n$. The corresponding normalized version, $\psi'_k(G)$, was later introduced by Dey and Mondal \cite{dey2024exact} as the ratio $\psi_k(G)/\psi_k(\mathbb{Z}_n)$.

In this paper, for a finite group $G$ and a positive integer $k$, we define $\psi''_k(G)$ as
$$\psi''_k(G)\coloneqq\frac{\psi_k(G)}{|G|^{k+1}}.$$
This is a new invariant that simultaneously generalizes both $\psi''(G)$ and $\psi'_k(G)$ (see Remarks \ref{probab-remark} and \ref{normalized-remark}). Apart from the generalization, the new variant can also be thought of as the probability that $k$ randomly chosen elements $y_1,y_2,\ldots,y_k$ in $G$ belong to the cyclic group generated by another randomly chosen element $x\in G$. It follows from the argument given below: Let $n = |G|$. Choose $x \in G$ uniformly and independently choose 
$y_1,\dots,y_k \in G$ uniformly. For a fixed $x$, the subgroup 
$\langle x \rangle$ has exactly $o(x)$ elements, hence
$$\Pr\left(( y_1, \dots, y_k \in \langle x \rangle \mid x \right)
= \left( \frac{o(x)}{n} \right)^k.$$
Averaging over $x$ gives
$$\Pr\left( y_1, \dots, y_k \in \langle x \rangle \right)
= \frac{1}{n} \sum_{x \in G} \frac{o(x)^k}{n^k}
= \frac{1}{n^{k+1}} \sum_{x \in G} o(x)^k
= \frac{\psi_k(G)}{n^{k+1}}
= \psi''_k(G).$$
\begin{remark}\label{probab-remark}
    As putting $k=1$ in $\psi''_k(G)$ yields $\psi''(G)$ (introduced in \cite{tuarnuauceanu_israel}), $\psi''(G)$ can also be interpreted as the probability that a randomly chosen element $y$ in $G$ belongs to the cyclic group generated by another randomly chosen element $x\in G$. 
\end{remark}
\begin{remark}\label{normalized-remark}
    For a group $G$ of order $n$, it is easy to note that $$\psi'_k(G)=\dfrac{\psi''_k(G)}{\psi''_k(\mathbb{Z}_n)}.$$ Hence $\psi'_k(G)$ can be thought of as a normalized version of $\psi''_k(G)$.
\end{remark}


The threshold values for nilpotent, cyclic and (super)solvable groups for $\psi'$ are spanned over several articles. We put those results together in the following theorem.

\begin{theorem}\label{thresholds-for-psi-prime}
Let $G$ be a finite group. Then the following hold:
\begin{enumerate}
\item[$(a).$] \cite{herzog_longobardi_exact} If $\psi^{\prime}(G)>\psi^{\prime}(\mathbb{Z}_2\times\mathbb{Z}_2)=\frac{7}{11}$, then $G$ is cyclic.


\item[$(b).$] \cite{tuarnuauceanu2021criterion} If $\psi^{\prime}(G)>\psi^{\prime}(S_3)=\frac{13}{21}$, then $G$ is nilpotent.

\item[$(c).$] \cite{azad2022two} If $\psi^{\prime}(G)>\psi^{\prime}(A_4)=\frac{31}{77}$, then $G$ is supersolvable.

\item[$(d).$] \cite{azad2018criterion} If $\psi^{\prime}(G)>\psi^{\prime}(A_5)=\frac{211}{1617}$, then $G$ is solvable.
\end{enumerate}
\end{theorem}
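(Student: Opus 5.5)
This theorem collects four results from the cited literature, so the honest "proof" is by citation: parts $(a)$--$(d)$ are exactly the main theorems of \cite{herzog_longobardi_exact}, \cite{tuarnuauceanu2021criterion}, \cite{azad2022two} and \cite{azad2018criterion}. If I had to reconstruct the arguments myself, I would use one strategy for all four parts: a minimal counterexample combined with the multiplicativity of $\psi$. If $G=A\times B$ with $\gcd(|A|,|B|)=1$, then $\psi(G)=\psi(A)\psi(B)$, and hence $\psi'(G)=\psi'(A)\psi'(B)$. I would also use that $\psi'(H)\le 1$ for every $H$, and that $\psi'(G)$ can only drop when we pass to larger or "less cyclic" pieces.

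For $(a)$ I would follow Herzog--Longobardi--Maj. Suppose $G$ is noncyclic of order $n$ and write $n=\prod p_i^{a_i}$. Then $\psi(\mathbb{Z}_n)=\prod_i \frac{p_i^{2a_i+1}+1}{p_i+1}$.
\begin{itemize}
\item First treat the case where $G$ has a normal cyclic Sylow $p$-subgroup $P$ with $p$ the largest prime divisor. Here the element order of $xy$ can be bounded in terms of the orders in $P$ and in $G/P$, which gives $\psi(G)\le \psi(P)\psi(G/P)$ up to a controlled loss. Induction then applies.
\item Otherwise there is a noncyclic Sylow subgroup, or a nontrivial semidirect action. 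In either case I would bound $\psi(G)$ from above by counting elements of each order. In a noncyclic $p$-group of order $p^a$, at most $p^{a-1}$ elements can have order $p^{a-1}$ or larger, which yields $\psi'(P)\le \psi'(\mathbb{Z}_p\times\mathbb{Z}_p)$-type bounds.
\item A direct comparison then shows that the maximum of $\psi'$ over noncyclic groups is $7/11$, attained at $\mathbb{Z}_2\times\mathbb{Z}_2$.
\end{itemize}

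For $(b)$--$(d)$ I would take a minimal counterexample $G$ to the conclusion, with $\psi'(G)$ above the threshold. I would locate a large cyclic subgroup $\langle x\rangle$. The inequality $\psi'(G)>c$ forces some element order $o(x)$ to be a large fraction of $|G|$, on the order of $c|G|$; this follows by averaging, since $\psi(G)\le |G|\max o(g)$ and $\psi(\mathbb{Z}_n)\approx n^2/(\text{const})$. A group with a cyclic subgroup of small index $[G:\langle x\rangle]$ is then analysed structurally:
\begin{itemize}
\item For small index, $G$ is metacyclic-like, hence supersolvable or solvable.
\item The remaining finitely many exceptional cases are checked directly. In these, the index is bounded by a constant depending on $c$, and the extremal groups $S_3$, $A_4$ and $A_5$ appear.
\end{itemize}
For $(d)$ I would use the fact that a nonsolvable group with a cyclic subgroup of index at most $|A_5|/5$-ish has a composition factor $A_5$ or $PSL(2,q)$, and then compute $\psi'$ for those groups.

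The main obstacle is the sharp constant. The averaging step gives only a crude bound on the index of the largest cyclic subgroup. Closing the gap down to the exact values $13/21$, $31/77$ and $211/1617$ requires careful case analysis on the number of elements of each order, and this is where the cited papers do the heavy lifting. In the present paper I would therefore simply invoke them.
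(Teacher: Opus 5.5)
Your actual proof, invoking the four cited papers, is exactly what the paper does: Theorem \ref{thresholds-for-psi-prime} is a compilation stated with references and no proof. The reconstruction you sketch, however, has genuine gaps. The main one is worth understanding, because it is precisely what separates $\psi'$ from the invariants $\psi''_k$ treated in Section \ref{section-main-theorem}.

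\textbf{The averaging step for (b)--(d) fails.} From $\psi(G)>c\,\psi(\mathbb{Z}_n)$ you get an $x$ with $o(x)>c\,\psi(\mathbb{Z}_n)/n$, that is,
\[
[G:\langle x\rangle]<\frac{n^2}{c\,\psi(\mathbb{Z}_n)}=\frac{1}{c\,\psi''(\mathbb{Z}_n)}.
\]
But $\psi''(\mathbb{Z}_n)$ is not bounded away from $0$. If $n=p_1p_2\cdots p_r$ is the product of the first $r$ primes, then $\psi''(\mathbb{Z}_n)=\prod_{i=1}^{r}\bigl(1-\frac{1}{p_i}+\frac{1}{p_i^2}\bigr)\to 0$ as $r\to\infty$, because $\sum 1/p_i$ diverges. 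This is the same phenomenon as in Lemma \ref{prime-product-limit-zero-lemma} and the density argument of Section \ref{section-density}.

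So $\psi(\mathbb{Z}_n)\approx n^2/\mathrm{const}$ is false. Averaging only bounds the index by a quantity that grows with the number of prime divisors of $n$, and your argument gives no reduction to finitely many exceptional groups. The bounded-index-plus-core method (Theorems \ref{lucchini-theorem} and \ref{prime-power-index-solvable-theorem}) works in this paper precisely because $\psi''_k$ is normalised by $|G|^{k+1}$, not by $\psi_k(\mathbb{Z}_n)$.

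For $\psi'$ you need an induction that tracks the prime factorisation of $n$. One option is the route you suggest in (a): peel off a normal cyclic Sylow subgroup $P$ via $\psi(G)\le\psi(P)\psi(G/P)$, and use multiplicativity of $\psi(\mathbb{Z}_n)$ over coprime factors. This is where the cited papers do their real work.

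\textbf{The counting claim in (a) is false.} A noncyclic group of order $p^a$ can have more than $p^{a-1}$ elements of order at least $p^{a-1}$:
\begin{itemize}
\item $\mathbb{Z}_2\times\mathbb{Z}_2$, the extremal group itself, has $3>2$ elements of order $2$;
\item $Q_8$ has $6>4$ elements of order $4$;
\item $\mathbb{Z}_9\times\mathbb{Z}_3$ has $18>9$ elements of order $9$.
\end{itemize}
What is true is $\exp(P)\le p^{a-1}$. For odd $p$ this alone gives $\psi'(P)<(p+1)/p^2\le 4/9<7/11$. For $p=2$ it only gives $\psi'(P)<3/4$, so you still need an actual count, which reduces to the noncyclic $2$-groups with a cyclic maximal subgroup.
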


In \cite{tuarnuauceanu_israel}, the author provided the thresholds for nilpotent, cyclic, abelian and (super)solvable groups for $\psi''$. In particular, he proved the following result.

\begin{theorem}\label{thresholds-for-psi-double-prime}\cite[Theorem 1.1]{tuarnuauceanu_israel}
Let $G$ be a finite group. Then the following hold:
\begin{enumerate}
\item[$(a).$] If $\psi^{\prime\prime}(G)>\psi^{\prime\prime}(\mathbb{Z}_2\times\mathbb{Z}_2)=\frac{7}{16}$, then $G$ is cyclic.

\item[$(b).$] If $\psi^{\prime\prime}(G)>\psi^{\prime\prime}(Q_8)=\frac{27}{64}$, then $G$ is abelian.

\item[$(c).$] If $\psi^{\prime\prime}(G)>\psi^{\prime\prime}(S_3)=\frac{13}{36}$, then $G$ is nilpotent.

\item[$(d).$] If $\psi^{\prime\prime}(G)>\psi^{\prime\prime}(A_4)=\frac{31}{144}$, then $G$ is supersolvable.

\item[$(e).$] If $\psi^{\prime\prime}(G)>\psi^{\prime\prime}(A_5)=\frac{211}{3600}$, then $G$ is solvable.
\end{enumerate}
\end{theorem}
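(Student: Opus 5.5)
We prove each part of the theorem by contraposition: assuming $G$ lacks the relevant property, we show $\psi''(G)\le$ the stated constant. The main tool is a multiplicativity/averaging principle. First, $\psi''$ is multiplicative on direct products of coprime order, since $\psi(H\times K)=\psi(H)\psi(K)$ when $\gcd(|H|,|K|)=1$. Second, if $N\trianglelefteq G$, then $o(g)\le |N|\cdot o(gN)$. Summing over cosets gives $\psi(G)\le |N|^2\psi(G/N)$, hence
\[
\psi''(G)\le \psi''(G/N),
\]
so $\psi''$ can only increase on passing to quotients. We also use the elementary bound that for a noncyclic $p$-group of order $p^n$ (excluding the generalized quaternion case),
\[
\psi(P)\le \frac{p^{2n+1}+1}{p+1}-\text{(a correction)},
\]
the maximum among noncyclic groups being attained by $\mathbb{Z}_{p^{n-1}}\times\mathbb{Z}_p$. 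Note also that $\psi''(\mathbb{Z}_{p^n})=\frac{p^{2n+1}+1}{p^{2n}(p+1)}$, which decreases toward $\frac{p}{p+1}$.

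\textbf{Part (a), cyclicity.} Suppose $G$ is not cyclic. If $G$ is nilpotent, write it as the product of its Sylow subgroups; some Sylow $P$ is noncyclic. Each factor satisfies $\psi''\le 1$, so $\psi''(G)\le\psi''(P)$. For a noncyclic $p$-group we bound $\psi''(P)\le \psi''(\mathbb{Z}_{p^{n-1}}\times\mathbb{Z}_p)$, or the generalized quaternion value, which is even smaller. This bound is maximized at $p=2$, $n=2$, giving $\frac{7}{16}$. If $G$ is non-nilpotent, part (c) gives $\psi''(G)\le \frac{13}{36}<\frac{7}{16}$.

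\textbf{Part (b), abelian.} For non-abelian $G$ we proceed the same way. In the nilpotent case some Sylow $P$ is non-abelian, so $|P|\ge p^3$, and we maximize $\psi''$ over such $P$. For a non-abelian $P$, $P/Z(P)$ is noncyclic, which lets us control element orders, and a direct count gives the maximum $\frac{27}{64}$, attained by $Q_8$.

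\textbf{Parts (c)--(e), and the main obstacle.} The main obstacle is part (c) and its analogues. We use the averaging identity $\psi(G)=\sum_{x\in G} o(x)$ together with the fact that a non-nilpotent $G$ has a non-normal Sylow $p$-subgroup for some $p$. Our first approach is induction on $|G|$. If $G$ has a normal subgroup $N$ with $G/N$ non-nilpotent, the quotient inequality reduces the problem to a smaller group. Otherwise $G$ is essentially a minimal non-nilpotent (Schmidt) group $P\rtimes Q$, with $Q$ cyclic and not normal. In that case the elements of order divisible by $|Q|$ lie in the many conjugates of $Q$, and we can estimate $\psi(G)$ directly. This yields $\psi''(G)\le\frac{13}{36}$, with equality at $S_3$.

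For supersolvability (part (d)), minimal non-supersolvable groups are known, and the same reduction through the quotient inequality applies, giving the bound $\frac{31}{144}$ attained by $A_4$. For solvability (part (e)), we reduce to a minimal simple section and use the known fact that nonabelian simple groups have $\psi''$ at most that of $A_5$, which is $\frac{211}{3600}$.

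The delicate part is handling the case where the quotient inequality does not apply cleanly. In that situation we would try bounding $o(x)\le |G|/|\text{a nontrivial normal subgroup}|$ for most $x$.
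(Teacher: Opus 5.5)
Your proposal has a genuine gap in parts (c)--(e). Since (a) and (b) rely on (c) for non-nilpotent $G$, the gap affects every part.

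The quotient inequality $\psi''(G)\le\psi''(G/N)$ only lets you assume that every proper quotient of a minimal counterexample $G$ has the desired property. That does not make $G$ a Schmidt group or a minimal non-supersolvable group. Those classes are defined by conditions on \emph{subgroups}, and you have no inequality comparing $\psi''(G)$ with $\psi''$ of a subgroup.

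A non-nilpotent group all of whose proper quotients are nilpotent merely has a unique minimal normal subgroup $N$ with $G/N$ nilpotent. This class contains $\mathbb{Z}_p\rtimes\mathbb{Z}_{p-1}$ for primes $p\ge 5$, which contain the non-nilpotent $D_{2p}$. It also contains every nonabelian simple group. Your sketch gives no bound for any of these, and even the Schmidt-group count is only announced, not carried out.

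The same defect breaks (e). Quotients cannot take you from $S_5$ to $A_5$, let alone to a ``minimal simple section.'' Moreover, the claim that every nonabelian simple group $S$ satisfies $\psi''(S)\le\psi''(A_5)$ is not a standard citable fact; it is a special case of (e) and needs proof. Your closing sentence essentially concedes that this case is open.

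The idea you are missing is to use the average itself: $\psi''(G)=\frac{1}{|G|}\sum_{x\in G}\frac{o(x)}{|G|}>c$ forces some $x$ with $[G:\langle x\rangle]<1/c$. This is how the paper proves Theorems \ref{nilpotent-theorem} and \ref{main-theorem}, whose case $k=1$ is the present statement.
\begin{itemize}
\item In (a)--(c), since $16/7$, $64/27$ and $36/13$ are all less than $3$, $G$ has a cyclic subgroup of index at most $2$. For (c) one then reduces by quotients to a nontrivial $\mathbb{Z}_{p^m}\rtimes\mathbb{Z}_2$ and computes. For (b) and (a) one gets $G\cong\mathbb{Z}_t\times Q$ with $Q$ a $2$-group having a cyclic maximal subgroup, which the classification settles.
\item In (d), $144/31<5$ gives index at most $4$. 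The core bound of Theorem \ref{lucchini-theorem} then yields either a nontrivial normal cyclic subgroup (induct through the quotient) or $|G|\le 12$, i.e.\ $G\cong A_4$.
\item In (e), $3600/211<18$, and indices $16,17$ are ruled out by Theorem \ref{prime-power-index-solvable-theorem}. Otherwise $|G|\le 210$, leaving $A_5$, $S_5$, $\mathrm{SL}(2,5)$, $\mathrm{PSL}(2,7)$, $A_5\times\mathbb{Z}_2$ and $A_5\times\mathbb{Z}_3$ to check.
\end{itemize}

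Your Sylow-factor treatment of the nilpotent cases in (a) and (b) is a workable alternative, but as written the $p$-group bounds are unproved. Your assertion that the generalized quaternion value is smaller is false for $Q_8$: $\psi''(Q_8)=27/64>23/64=\psi''(\mathbb{Z}_4\times\mathbb{Z}_2)$. This is harmless only because $27/64<7/16$.

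To make this route rigorous, note that $\psi''(P)\le\exp(P)/|P|\le p^{-2}\le 1/4$ unless $P$ has a cyclic subgroup of index $p$. Then use the classification of such $p$-groups, including $M_{p^n}$ for odd $p$. The remark that $P/Z(P)$ is noncyclic does not by itself produce the count you need in (b).
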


We generalize this result by proving the following two theorems in Section \ref{section-main-theorem}.

\begin{theorem}\label{nilpotent-theorem}
Let $G$ be a finite group and $k$ be a positive integer. Then $G$ is nilpotent if 
\begin{itemize}
\item $\psi^{\prime\prime}_k(G)>\psi^{\prime\prime}_k(S_3)=\frac{1+3\cdot 2^k+2\cdot 3^k}{6^{k+1}}$ for $k=1,2$.
\item $\psi^{\prime\prime}_k(G)\ge\frac{1}{2^{k+1}}$ for $k\ge 3$.
\end{itemize}
\end{theorem}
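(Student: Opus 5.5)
We argue by contradiction: take a non-nilpotent group $G$ of minimal order with $\psi''_k(G)$ above the stated threshold. The key tool is multiplicativity together with a reduction to a subgroup or quotient.

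\emph{Multiplicativity.} Since orders multiply for coprime direct factors, $\psi_k(A\times B)=\psi_k(A)\psi_k(B)$ whenever $\gcd(|A|,|B|)=1$. Hence $\psi''_k$ is multiplicative on such products.

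\emph{A general upper bound.} For any $g\in G$ we have $o(g)\le |G|$, and the only elements of order $|G|$ are generators. So $\psi''_k(G)\le 1$, and the bulk of the sum comes from elements of large order.

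\emph{Bounding non-nilpotent $G$.} Let $P$ be a Sylow $p$-subgroup of $G$ that is not normal, so $G$ has $m=|G:N_G(P)|\ge p+1$ Sylow $p$-subgroups. I would split $G$ according to whether $o(g)$ is divisible by the full $p$-part $|P|=p^a$ or not.
\begin{itemize}
\item If $p^a\nmid o(g)$, then $o(g)\le |G|/p$.
\item If $p^a\mid o(g)$, then the $p$-part of $g$ generates a cyclic Sylow subgroup, so $P$ is cyclic. Moreover $g$ lies in $N_G(\langle g_p\rangle)$, a normalizer of a Sylow subgroup, so $o(g)\le |N_G(P)|=|G|/m\le |G|/(p+1)$.
\end{itemize}
Either way every element satisfies $o(g)\le |G|/2$. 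This gives the crude estimate $\psi''_k(G)\le 1/2^k$.

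This crude bound is too weak, so it must be refined by counting elements of large order. If $o(g)=|G|/2$, then $\langle g\rangle$ has index $2$, hence is normal cyclic. Every subgroup of a normal cyclic subgroup is characteristic in it, so every Sylow subgroup of $\langle g\rangle$ is normal in $G$. Taking $p$ odd shows the odd part is a normal Hall subgroup $C$, and $G$ is $C\rtimes \mathbb{Z}_{2^a}$ with the action nontrivial.

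In that situation I would compute $\psi_k$ directly. The elements of order exactly $|G|/2$ are the $\varphi(|G|/2)$ generators of one cyclic subgroup. All other elements have order at most $|G|/3$, or equal $|G|/4$ when the $2$-part exceeds $2$. This gives
$$\psi''_k(G)\le \frac{\varphi(n/2)(n/2)^k+(n-\varphi(n/2))(n/3)^k}{n^{k+1}},$$
with small adjustments for the $|G|/4$ case.

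\emph{Conclusion in each range of $k$.}
\begin{itemize}
\item For $k\ge 3$, I expect this to be $<1/2^{k+1}$. The generators contribute at most $\frac12\cdot\frac{1}{2^k}$, with equality only if $\varphi(n/2)=n/2$, which is impossible for $n/2>1$. The remaining elements contribute at most $(1/3)^k$, and the slack $\frac{1}{2^{k+1}}\bigl(1-2\varphi(n/2)/n\bigr)$ should beat $(1/3)^k$ after careful accounting.
\item For $k=1,2$, the threshold is exactly $\psi''_k(S_3)$. I would redo the count, noting that the extremal configuration is $C=\mathbb{Z}_3$ with a nontrivial action. The minimal-counterexample structure should allow induction via multiplicativity: factor $G=H\times Z$ with coprime orders, and use that $\psi''_k(Z)\le 1$ while $H$ is a smaller non-nilpotent group.
\end{itemize}

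\emph{Main obstacle.} The hard part is the careful counting of elements of order between $|G|/4$ and $|G|/2$, so as to get the sharp constant $\psi''_k(S_3)$ for $k=1,2$. This is essentially Tărnăuceanu's argument for $k=1$, and I would imitate it, with the additional check that the inequalities survive the $k=2$ exponent. One also has to explain why the threshold for $k\ge 3$ is $1/2^{k+1}$ rather than $\psi''_k(S_3)$. Presumably some other non-nilpotent family, of the form $\mathbb{Z}_3\rtimes\mathbb{Z}_{2^a}$, has $\psi''_k$ approaching $1/2^{k+1}$ from below, so only the weak inequality bound holds uniformly.
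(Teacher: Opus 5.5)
There is a genuine gap. Your central counting estimate cannot give either threshold, and no more careful accounting with the same two ingredients can fix it. Those ingredients are the $\varphi(n/2)$ generators of $H$, and $o(g)\le n/3$ for every other element. Already for $G=S_3$ your bound gives $14/36$, $34/216$, $86/1296$ for $k=1,2,3$. These exceed $13/36$, $31/216$ and $81/1296=1/2^{4}$ respectively. For $G=D_p$ your bound equals $\frac{p-1}{p}\cdot\frac{1}{2^{k+1}}+\frac{p+1}{2p}\cdot\frac{1}{3^k}$, which tends to $\frac{1}{2^{k+1}}+\frac{1}{2\cdot 3^k}>\frac{1}{2^{k+1}}$ for every $k$.

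The bound fails because it decouples two facts that are actually linked. In $D_p$ almost all of $H$ consists of generators, but the $p$ elements outside $H$ are involutions. In $S_3\times\mathbb{Z}_m$ the elements outside $H$ do have order $n/3$, but then at most two thirds of $H$ are generators.

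The missing idea is to tie both to one non-central odd Sylow subgroup $P\le H$. It exists because the odd Sylow subgroups are characteristic in $H$, hence normal, and if all were central $G$ would be nilpotent. Take $g\notin H$. It acts nontrivially on $P$, since otherwise $G=\langle H,g\rangle$ would centralize $P$. Also $g^2\in H$ acts trivially, so $g$ acts as the unique involutory automorphism of the cyclic group $P$, namely inversion. Hence $g_p\in C_P(g)=1$, so $p\nmid o(g)$ and $o(g)\le n/|P|$. Combining this with $\psi''_k(H)\le\psi''_k(\mathbb{Z}_{|P|})$, which follows from coprime multiplicativity and $\psi''_k\le 1$, gives
$$\psi''_k(G)\le\frac{\psi''_k(\mathbb{Z}_{|P|})}{2^{k+1}}+\frac{1}{2|P|^k}=\psi''_k(\mathbb{Z}_{|P|}\rtimes\mathbb{Z}_2),$$
with the nontrivial action on the right. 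The required inequality for this right-hand side is exactly what the paper verifies: $p^k-2^kp-1>0$ for $k\ge 3$, and comparison with $S_3$ for $k=1,2$.

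The paper reaches the same group by a different route. After averaging produces a cyclic subgroup of index $2$, it uses the quotient inequality $\psi''_k(G)\le\psi''_k(G/N)$ of Lemma~\ref{product-quotient}(1) together with minimality. This lets it factor out the other Sylow subgroups and $H\cap M$, arriving at a nontrivial $\mathbb{Z}_{p^m}\rtimes\mathbb{Z}_2$, whose value it computes exactly.

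Your plan for $k=1,2$ also lacks this tool. Induction via coprime multiplicativity cannot reduce groups such as $D_{15}$ or $\mathbb{Z}_3\rtimes\mathbb{Z}_4$, which are not direct products of smaller coprime-order factors. What is needed there is the quotient inequality.

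Your closing guess is also wrong. $\mathbb{Z}_3\rtimes\mathbb{Z}_{2^a}$ has $S_3$ as a quotient, so its value is at most $\psi''_k(S_3)<1/2^{k+1}$ for $k\ge 3$. The family approaching $1/2^{k+1}$ is $D_p$ with $p\to\infty$ (Remark~\ref{assym-tight}), which is exactly where your $n/3$ estimate breaks down.

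Finally, the Sylow $2$-subgroup need not be cyclic. For example, $S_3\times\mathbb{Z}_2$ has $\mathbb{Z}_6$ of index $2$ but Sylow $2$-subgroup $\mathbb{Z}_2\times\mathbb{Z}_2$, so writing $G=C\rtimes\mathbb{Z}_{2^a}$ is not justified.
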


\begin{theorem}\label{main-theorem}
Let $G$ be a finite group and $k$ be a positive integer. Then the following hold.
\begin{enumerate}
\item[$(a).$] If $\psi^{\prime\prime}_k(G)>\psi^{\prime\prime}_k(Q_8)=\frac{1+2^k+6\cdot 4^k}{8^{k+1}}$, then $G$ is abelian.

\item[$(b).$] If $\psi^{\prime\prime}_k(G)>\psi^{\prime\prime}_k(\mathbb{Z}_2\times\mathbb{Z}_2)=\frac{1+3\cdot 2^k}{4^{k+1}}$, then $G$ is cyclic.

\item[$(c).$] If $\psi^{\prime\prime}_k(G)>\psi^{\prime\prime}_k(A_4)=\frac{1+8\cdot 3^k+3\cdot 2^k}{12^{k+1}}$, then $G$ is supersolvable.

\item[$(d).$] If $\psi^{\prime\prime}_k(G)>\psi^{\prime\prime}_k(A_5)=\frac{1+15\cdot 2^k+20\cdot 3^k+24\cdot 5^k}{60^{k+1}}$, then $G$ is solvable.
\end{enumerate}
\end{theorem}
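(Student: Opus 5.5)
We will prove the four parts of Theorem \ref{main-theorem} with one common strategy. Assume $G$ fails the property in question, and bound $\psi''_k(G)$ above by the threshold value. The basic tools are elementary.

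First, $\psi''_k$ is multiplicative on direct products of groups of coprime orders. This is because $o((a,b))=o(a)o(b)$ there, so $\psi_k(A\times B)=\psi_k(A)\psi_k(B)$.

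Second, for a subgroup or quotient we use the standard bounds for $\psi$, adapted to $k$-th powers. If $N\trianglelefteq G$, then $o(g)\le o(gN)\,|N|$. Summing over cosets gives
\[
\psi_k(G)\le |N|^{k+1}\psi_k(G/N), \qquad\text{so}\qquad \psi''_k(G)\le \psi''_k(G/N).
\]
Hence $\psi''_k$ does not increase on passing to extensions.

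Third, we use a crude counting bound. If every element of $G$ has order at most $n/m$ for some $m$, we obtain $\psi''_k(G)\le m^{-k}$. More precisely, we split $\psi_k(G)$ according to the orders that occur.

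For (b), the goal is: if $G$ is non-cyclic, then $\psi''_k(G)\le \frac{1+3\cdot 2^k}{4^{k+1}}$.

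\begin{itemize}
\item If $G$ is non-cyclic, no element has order $n$. Elements of order $n/p$ for primes $p\mid n$ contribute at most $n\,(n/2)^k$ in total.
\item A sharper count is needed. Let $p$ be the smallest prime divisor of $n$. If some Sylow subgroup is non-cyclic, we reduce to the $p$-group case via multiplicativity: a nilpotent non-cyclic $G$ has a non-cyclic Sylow factor.
\item For a non-cyclic $p$-group $P$ of order $p^a$, every element has order at most $p^{a-1}$. Counting the elements of order exactly $p^{a-1}$ (at most $p^a-p^{a-1}$ of them) and bounding the rest by $p^{a-2}$ gives the bound; the minimum occurs at $\mathbb{Z}_2\times\mathbb{Z}_2$.
\item If $G$ is non-nilpotent, we use Theorem \ref{nilpotent-theorem} for $k\ge3$, namely $\psi''_k(G)<\frac1{2^{k+1}}$. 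For $k=1,2$ the bound is $\psi''_k(S_3)$. We then check that these values are at most the $\mathbb{Z}_2\times\mathbb{Z}_2$ value, which is $\frac{1+3\cdot2^k}{4^{k+1}}\approx \frac{3}{4\cdot 2^{k+1}}$.
\end{itemize}

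Here a problem appears. For large $k$ this is smaller than $\frac1{2^{k+1}}$, so Theorem \ref{nilpotent-theorem} alone is too weak. For non-nilpotent groups we must use that there is no element of order $n$ or $n/2$ of the relevant kind. We therefore redo a count for non-cyclic $G$ directly:
\begin{itemize}
\item the elements whose cyclic subgroup has index $2$ lie in cyclic index-$2$ subgroups;
\item all other elements have order at most $n/3$ or lie in other maximal cyclic subgroups.
\end{itemize}
A careful version of this inequality, $\psi_k(G)\le 1\cdot(\text{bound}) + \ldots$, is the step I expect to be the main obstacle. I would first try induction on $|G|$ via a minimal normal subgroup, using the quotient inequality above.

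For (a), a non-abelian $G$ that is nilpotent has a non-abelian Sylow $p$-subgroup. By multiplicativity it suffices to treat non-abelian $p$-groups, and in such a group there is no element of order $\ge p^{a-1}$ outside a controlled set. The key cases are $p=2$, where $Q_8$ and $D_8$ are extremal and $Q_8$ (six elements of order $4$) beats $D_8$. Odd $p$ gives much smaller values. For non-nilpotent $G$ we compare with Theorem \ref{nilpotent-theorem} and with the (b)-type bound. Since $\frac{1+2^k+6\cdot 4^k}{8^{k+1}}\approx\frac{6}{8\cdot 2^{k+1}}\cdot\frac{1}{?}$, this comparison needs to be checked explicitly for each $k$.

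For (c) and (d), we take a minimal counterexample $G$. By the quotient inequality, every proper quotient satisfies the hypothesis, so it is supersolvable (respectively solvable); it is also simple, or has a unique minimal normal subgroup. For (d), a non-solvable $G$ has a non-abelian simple section. Using Sylow counting together with the fact that a Sylow $2$-subgroup is not cyclic (Burnside), most elements have order $\le n/12$ or so. We compare with $A_5$ by a direct estimate of $\psi_k$ for non-abelian simple groups: every element order is at most $|G|/60$ times small factors, and the number of such elements is bounded.

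For (c), the analogous reduction uses that a minimal non-supersolvable group has a normal Sylow subgroup with a non-cyclic chief factor. The extremal example is $A_4$.

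In each part we check small $k$ separately against the tables of small groups. This is routine but needed, since the asymptotic comparison between thresholds only kicks in for large $k$.
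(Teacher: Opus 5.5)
\textbf{Parts (a) and (b).} The obstacle you flag in (b), and the unresolved ``$\frac{1}{?}$'' in (a), both come from an arithmetic slip.

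\begin{itemize}
\item Asymptotically, $\frac{1+3\cdot 2^k}{4^{k+1}}\sim\frac{3}{2^{k+2}}=\frac{3}{2}\cdot\frac{1}{2^{k+1}}$, not $\frac34\cdot\frac1{2^{k+1}}$, and $\psi''_k(Q_8)$ has the same asymptotics.
\item Exactly, $1+3\cdot 2^k>2^{k+1}$ and $1+2^k+6\cdot 4^k>4^{k+1}$, so both thresholds exceed $\frac{1}{2^{k+1}}$.
\item For $k=1,2$ both thresholds exceed $\psi''_k(S_3)$: compare $\frac{7}{16},\frac{27}{64}$ with $\frac{13}{36}$, and $\frac{13}{64},\frac{101}{512}$ with $\frac{31}{216}$.
\end{itemize}

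So Theorem \ref{nilpotent-theorem} gives nilpotency at once, exactly as in the paper, and no separate count for non-nilpotent groups is needed.

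After that, your reduction to a single non-cyclic (resp.\ non-abelian) Sylow factor via multiplicativity is sound, but the $p$-group step is not. Your claim that a non-cyclic group of order $p^a$ has at most $p^a-p^{a-1}$ elements of order $p^{a-1}$ is false precisely at the extremal groups. $\mathbb{Z}_2\times\mathbb{Z}_2$ has $3>2$ involutions, and $Q_8$ has $6>4$ elements of order $4$. Since in (b) you did not first reduce to abelian groups, $Q_8$ is in scope there.

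For all other non-cyclic $p$-groups the claim is true, but proving it needs genuine structural input on $p$-groups with a cyclic maximal subgroup (e.g.\ their classification), which you do not supply. In (a), the non-abelian case (``$Q_8$ and $D_8$ are extremal, odd $p$ is smaller'') is only asserted, yet the families $D_{2^n},Q_{2^n},SD_{2^n},M_{2^n}$ and $M_{p^n}$ all have to be controlled.

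With that input your count would close the argument: it gives $\psi''_k(P)\le\frac{p-1}{p^{k+1}}+\frac{1}{p^{2k+1}}$, which lies below both thresholds, and $\mathbb{Z}_p\times\mathbb{Z}_p$ and $Q_8$ can be computed directly. The paper gets the structural reduction more cheaply. The averaging step described below yields a cyclic subgroup of index at most $2$ (since $64/27,16/7<3$). This kills all odd Sylow subgroups and leaves a $2$-group with a cyclic subgroup of index $2$, handled by the classification and the explicit formulas in Appendix Lemmas A and B.

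\textbf{Parts (c) and (d).} Here there is no actual argument. Knowing that a minimal counterexample has supersolvable (resp.\ solvable) proper quotients, or a unique minimal normal subgroup, yields no upper bound on $\psi''_k$. Statements like ``most elements have order $\le n/12$ or so'' or ``every element order is at most $|G|/60$ times small factors'' are neither precise nor justified. Moreover, the only inheritance you establish is to quotients. So the minimal counterexample is not known to be a minimal non-supersolvable group (all proper subgroups supersolvable), and the structure theorem you quote is not available.

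The missing idea, which drives all four parts in the paper, runs as follows.
\begin{itemize}
\item Since $\psi''_k(G)$ is the average of $(o(g)/|G|)^k$, the hypothesis $\psi''_k(G)>c$ forces some $x$ with $[G:\langle x\rangle]<c^{-1/k}$.
\item The power-mean inequality applied to the extremal group $G_0$ gives $\psi''_k(G_0)\ge\psi''(G_0)^k$ (Lemma \ref{threshold-inequality-lemma}). Hence $c^{-1/k}\le 1/\psi''(G_0)$ uniformly in $k$: $144/31<5$ for $A_4$ and $3600/211<18$ for $A_5$.
\item Put $N=\operatorname{Core}_G(\langle x\rangle)$. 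Theorem \ref{lucchini-theorem} gives $[\langle x\rangle:N]<[G:\langle x\rangle]$.
\item If $N\ne 1$, minimality applied to $G/N$ finishes the argument, because $N$ is cyclic and supersolvability (resp.\ solvability) lifts from $G/N$ to $G$.
\item If $N=1$, then $|G|\le 12$, resp.\ $|G|\le 15\cdot 14=210$ once indices $16,17$ are removed by Theorem \ref{prime-power-index-solvable-theorem}. This leaves only $A_4$, resp.\ $A_5,S_5,\operatorname{SL}(2,5),A_5\times\mathbb{Z}_2,\operatorname{PSL}(2,7),A_5\times\mathbb{Z}_3$, to check.
\end{itemize}
This uniform-in-$k$ index bound is also what removes your perceived need to ``check small $k$ separately''.
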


In Theorem \ref{nilpotent-theorem}, the threshold for nilpotency is asymptotically tight, for $k\ge 3$. We show that there exists a sequence of finite non-nilpotent groups $G_n$ for which $\psi''_k(G_n)$ (see Remark \ref{assym-tight}) tends to $1/2^{k+1}$.

\medskip

Let $\mathscr{G}$ denote the class of all finite groups. Besides providing different thresholds for $\psi''$ in \cite{tuarnuauceanu_israel}, as a spin-off, in \cite{lazorec2020density}, the authors considered the function $\psi'':\mathscr{G}\to (0,1]$ defined by $\psi''(G)=\frac{\psi(G)}{|G|^2}$ and studied the density of its image, along with its injectivity and surjectivity. We shall call the image of $\psi''$ as the spectrum of $\psi''$ and denote $\operatorname{Im}\psi''$ as $\operatorname{Spec}\psi''$. They proved the following result.

\begin{theorem}\label{density-psi-double-prime}\cite[Theorem 1.1]{lazorec2020density}
The set 
$$\operatorname{Spec}\psi^{\prime\prime}=\lbrace{\psi^{\prime\prime}(G): G\in\mathscr{G}}\rbrace$$
is dense in $[0,1]$.
\end{theorem}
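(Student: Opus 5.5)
The plan is to exploit multiplicativity of $\psi''$ over direct products of groups of coprime order and then realize arbitrary targets in $(0,1)$ as limits of finite products of values $\psi''(\mathbb{Z}_p)$.

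\textbf{Step 1 (multiplicativity).} If $\gcd(|A|,|B|)=1$, then every element of $A\times B$ has order $o((a,b))=o(a)o(b)$. Hence $\psi(A\times B)=\psi(A)\psi(B)$. Since also $|A\times B|^2=|A|^2|B|^2$, we get $\psi''(A\times B)=\psi''(A)\psi''(B)$.

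\textbf{Step 2 (cyclic groups of prime order).} For a prime $p$,
$$\psi''(\mathbb{Z}_p)=\frac{1+(p-1)p}{p^2}=1-\frac1p+\frac1{p^2}=:a_p\in(0,1).$$
Note that $a_p\to 1$ as $p\to\infty$. Moreover $1-a_p\ge \frac{1}{2p}$, and $\sum_p 1/p$ diverges, so $\prod_{p\ge q}a_p=0$ for every $q$. By Step 1, for any finite set $S$ of primes,
$$\psi''\Bigl(\prod_{p\in S}\mathbb{Z}_p\Bigr)=\prod_{p\in S}a_p\in\operatorname{Spec}\psi''.$$

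\textbf{Step 3 (density lemma).} I would prove the following: if $a_n\in(0,1)$, $a_n\to1$, and every tail product $\prod_{n\ge N}a_n$ equals $0$, then finite subproducts are dense in $[0,1]$. Fix $x\in(0,1)$ and $\varepsilon>0$. Choose $N$ with $a_n>1-\varepsilon$ for $n\ge N$. Run a greedy procedure over $n=N,N+1,\dots$: include $a_n$ exactly when the current product times $a_n$ stays $>x$.

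Because the tail product is $0$, infinitely many factors must eventually be rejected, otherwise the product would drop below $x$. At the first rejection, the current product $P$ satisfies $x<P$ and $Pa_n\le x$. Therefore
$$P\le \frac{x}{a_n}<\frac{x}{1-\varepsilon}.$$
So $P$ is a finite subproduct within $O(\varepsilon)$ of $x$. Finally, the endpoints: $1=\psi''(1)$ is attained, and $0$ is approached by taking $x$ small.

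\textbf{Main obstacle.} The main work is the density lemma of Step 3, specifically making sure the greedy procedure rejects some factor, which is exactly where divergence of $\sum 1/p$ is used. Everything else is routine computation.
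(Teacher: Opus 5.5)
Your proof is correct, and it reaches the result by a somewhat different route from the paper. The paper proves this statement as the case $k=1$ of Theorem~\ref{density-theorem}. Both arguments share the same skeleton: multiplicativity of $\psi''$ over direct products of coprime order, plus divergence of $\sum 1/p$.

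\textbf{Building blocks.} You work with $\mathbb{Z}_p$, so your approximants $\prod_{p\in S}(1-1/p+1/p^2)$ are genuine values of $\psi''$ on cyclic groups of squarefree order. The paper instead uses $\prod_{i\in I}\mathbb{Z}_{p_i^n}$ and lets $n\to\infty$. Its building blocks $p/(p+1)$ (the $k=1$ case of $p^k(p-1)/(p^{k+1}-1)$) are therefore only limit points of the spectrum, which costs an extra closure step.

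\textbf{Density of finite subproducts.} You prove this directly with the greedy argument. The paper instead:
\begin{enumerate}
\item takes logarithms;
\item uses the limit comparison test to get divergence of $\sum\ln(1+1/p_i)$;
\item cites Nitecki's result that finite subsums of a null sequence with divergent sum are dense in $[0,\infty)$;
\item transfers back through $\exp$ and $x\mapsto 1/x$;
\item treats $0$ separately via Lemma~\ref{prime-product-limit-zero-lemma}.
\end{enumerate}
Your greedy lemma is essentially the multiplicative form of the standard proof of Nitecki's statement. Your version is therefore self-contained and shorter, while the paper outsources that core to a cited general lemma.

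\textbf{Other remarks.} Your argument carries over verbatim to every $k$, since $\psi''_k(\mathbb{Z}_p)=1-1/p+1/p^{k+1}$ and $1-\psi''_k(\mathbb{Z}_p)=(p^k-1)/p^{k+1}\ge 1/(2p)$. One small wording point: in Step 3 you need only one rejection, not infinitely many. It is forced because otherwise the partial products $\prod_{n=N}^{M}a_n$ would all exceed $x$ while tending to $0$.
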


In a similar manner, for a positive integer $k$, we consider the function $\psi''_k:\mathscr{G}\to (0,1]$ defined by $\psi''_k(G)=\frac{\psi_k(G)}{|G|^{k+1}}$. In this regard, we generalize their result by proving the following theorem.

\begin{theorem}\label{density-theorem}
For any positive integer $k$, the set 
$$\operatorname{Spec}\psi^{\prime\prime}_k=\lbrace{\psi^{\prime\prime}_k(G): G\in\mathscr{G}}\rbrace$$
is dense in $[0,1]$.
\end{theorem}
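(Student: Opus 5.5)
The strategy is to mimic the argument of \cite{lazorec2020density}: exploit multiplicativity of $\psi''_k$ over direct products of groups of coprime orders, and show that for a suitable family of groups the values $\psi''_k$ are factors close to $1$ whose product can approximate any target in $(0,1]$. The first step is the observation that if $\gcd(|A|,|B|)=1$ then $o((a,b))=o(a)o(b)$. Hence $\psi_k(A\times B)=\psi_k(A)\psi_k(B)$, and therefore
$$\psi''_k(A\times B)=\psi''_k(A)\,\psi''_k(B).$$
Consequently $\operatorname{Spec}\psi''_k$ is closed under products of values coming from groups of pairwise coprime orders.

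Next we need a supply of groups, indexed by primes, with $\psi''_k$ close to $1$ but whose logarithms are not summable. The natural candidate is the cyclic group $\mathbb{Z}_p$. Its value is
$$\psi''_k(\mathbb{Z}_p)=\frac{1+(p-1)p^k}{p^{k+1}}=1-\frac{1}{p}+\frac{1}{p^{k+1}}.$$
This tends to $1$ as $p\to\infty$. Moreover $-\log\psi''_k(\mathbb{Z}_p)\sim 1/p$, and $\sum_p 1/p=\infty$. These are the two properties we need.

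We now apply the standard lemma, the same one used in \cite{lazorec2020density}: if $(x_n)$ is a sequence of positive reals with $x_n\to 0$ and $\sum x_n=\infty$, then the set of finite subsums $\{\sum_{n\in I}x_n : I \text{ finite}\}$ is dense in $[0,\infty)$. The proof is greedy: given $t>0$ and $\varepsilon>0$, discard the finitely many terms exceeding $\varepsilon$, then add terms in order while the partial sum stays below $t$. Divergence guarantees the partial sums get within $\varepsilon$ of $t$.

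Applying the lemma with $x_p=-\log\psi''_k(\mathbb{Z}_p)$ over primes $p$ gives the following. For each $\alpha\in(0,1]$ and each $\varepsilon>0$ there is a finite set of distinct primes $p_1,\dots,p_r$ with
$$\Bigl|\sum_i x_{p_i}+\log\alpha\Bigr|<\varepsilon.$$
Exponentiating and using multiplicativity, we get
$$\psi''_k(\mathbb{Z}_{p_1\cdots p_r})=\prod_i\psi''_k(\mathbb{Z}_{p_i}),$$
which is within $O(\varepsilon)$ of $\alpha$. This proves density in $(0,1]$, and hence in $[0,1]$, since $0$ is a limit point: for instance take all primes up to $N$ and let $N\to\infty$.

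The only point needing care is verifying that $x_p>0$, $x_p\to0$ and $\sum_p x_p=\infty$. This is immediate from $1-\tfrac1p<\psi''_k(\mathbb{Z}_p)<1$ together with $-\log(1-u)\ge u$, which gives $x_p\ge \tfrac1p-\tfrac1{p^{k+1}}\ge \tfrac1{2p}$. The rest is the routine density lemma. The subtlety is mostly that the greedy approximation must respect distinctness of the primes, which holds automatically since each prime is used at most once.
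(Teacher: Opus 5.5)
Your proof is correct and follows the same route as the paper: multiplicativity of $\psi''_k$ over cyclic factors of coprime order, the finite-subsum density lemma applied to logarithms comparable to $1/p$, and divergence of $\sum_p 1/p$, which also gives the limit point $0$. The one difference is your choice of building blocks: you use the actual values $\psi''_k(\mathbb{Z}_p)=1-\frac{1}{p}+\frac{1}{p^{k+1}}$, while the paper uses the limits $\lim_{n\to\infty}\psi''_k(\mathbb{Z}_{p^n})=\frac{p^k(p-1)}{p^{k+1}-1}$, so your version avoids the formula for $\psi_k(\mathbb{Z}_{p^n})$ and the extra closure step, and shows that square-free cyclic groups alone already give a dense set.
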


The proof of Theorem \ref{density-theorem} in Section \ref{section-density} also indicates that the set $\operatorname{Spec}\psi''_k\big|_\mathscr{C}$ is dense in [0, 1], where $\mathscr{C}$ is the class of all finite cyclic groups. Finally we conclude with the comparison among the performances of $\psi',\psi''$ and $\psi''_k$ as indicators in Section \ref{comparison-section}.



\section{Group properties derived from $\psi^{\prime\prime}_k$}\label{section-main-theorem}

In this section we prove Theorems \ref{nilpotent-theorem} and \ref{main-theorem}. To begin with we recall and prove several lemmas.

\begin{lemma}\label{product-quotient}
Let $G$ and $H$ be finite groups and $N\lhd G$. Then for any positive integer $k$ the following are true:
\begin{enumerate}
\item[$(1)$] $\psi^{\prime\prime}_k(G)\leq\psi^{\prime\prime}_k(G/N)$.
\item[$(2)$] $\psi^{\prime\prime}_k(G\times H) \le\psi^{\prime\prime}_k(G)\psi^{\prime\prime}_k(H)$. Equality holds if and only if $\gcd(|G|,|H|)=1$.
\item[$(3)$] $\psi''_k(G) \leq \frac{exp(G)^k}{|G|^k}$.
\end{enumerate}
\end{lemma}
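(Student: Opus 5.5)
For part $(1)$, we compare element orders in $G$ with those in $G/N$. For $g\in G$, write $\bar g=gN$. Then $g^{o(\bar g)}\in N$, so $o(g)$ divides $o(\bar g)\,o(g^{o(\bar g)})$, and $o(g^{o(\bar g)})$ divides $|N|$. Hence $o(g)\le o(gN)\,|N|$. Summing over the $|N|$ elements of each coset gives
\[
\psi_k(G)=\sum_{g\in G}o(g)^k\le |N|\cdot |N|^k\sum_{\bar g\in G/N}o(\bar g)^k=|N|^{k+1}\psi_k(G/N).
\]
Dividing by $|G|^{k+1}=|N|^{k+1}|G/N|^{k+1}$ yields $\psi''_k(G)\le\psi''_k(G/N)$.

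For part $(2)$, we use that $o((g,h))=\operatorname{lcm}(o(g),o(h))\le o(g)o(h)$. Summing over all pairs $(g,h)$ gives $\psi_k(G\times H)\le\psi_k(G)\psi_k(H)$, and dividing by $(|G||H|)^{k+1}$ gives the stated inequality.

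The equality case is the only point needing care. Each term satisfies $\operatorname{lcm}\le$ product, so equality holds iff $\gcd(o(g),o(h))=1$ for all $g\in G$ and $h\in H$. If $\gcd(|G|,|H|)=1$, this is automatic. Conversely, if a prime $p$ divides both $|G|$ and $|H|$, Cauchy's theorem gives elements $g\in G$ and $h\in H$ of order $p$. Then $o((g,h))=p<p^2$, so the inequality is strict, because all terms are positive.

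For part $(3)$, every $g\in G$ satisfies $o(g)\le \exp(G)$, so $\psi_k(G)\le |G|\exp(G)^k$. Dividing by $|G|^{k+1}$ gives the claim. No step is a real obstacle; the most delicate point is the inequality $o(g)\le o(gN)|N|$ in part $(1)$.
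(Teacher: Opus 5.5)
Your proof is correct and follows the paper's approach. Part (3) is verbatim the paper's argument. For (1) and (2) the paper only cites Herzog et al.\ and Amiri et al., whose arguments are the ones you give: the coset bound $o(g)\le o(gN)\,|N|$, and the termwise estimate $\operatorname{lcm}(o(g),o(h))\le o(g)o(h)$ with Cauchy's theorem for the equality case.
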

\pf The proofs of $(1)$  and $(2)$ are identical to those of \cite[Proposition 2.6]{herzog2018two} and \cite[Lemma 2.1]{amiri2011sum}, respectively. For $(3)$, the upper bound follows from the fact that 
$$\psi_k(G)=\sum_{g \in G}o(g)^k\le\sum_{g \in G}exp(G)^k=|G|\cdot exp(G)^k.$$ 
Dividing both sides by $|G|^{k+1}$, we have the result.\qed

\begin{lemma}\label{cyclic-p-group-lemma}\cite[Lemma 2.3]{dey2024exact}
Let $G$ be a cyclic group of order $p^n$ for some prime $p$. Then for any positive integer $k$
$$\psi_k(G)=\frac{p^{nk+n+k}(p-1)+p^k-1}{p^{k+1}-1}.$$
\end{lemma}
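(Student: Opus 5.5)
The plan is to count elements of $\mathbb{Z}_{p^n}$ by their order and sum directly. In a cyclic group of order $p^n$ there is, for each divisor $p^i$ with $0\le i\le n$, exactly one subgroup of order $p^i$. That subgroup contains $\varphi(p^i)$ generators, and these are precisely the elements of order $p^i$. Thus $\varphi(1)=1$, and $\varphi(p^i)=p^i-p^{i-1}=p^{i-1}(p-1)$ for $i\ge 1$. Consequently
$$\psi_k(G)=\sum_{i=0}^{n}\varphi(p^i)\,p^{ik}=1+(p-1)\sum_{i=1}^{n}p^{i-1}p^{ik}=1+\frac{p-1}{p}\sum_{i=1}^{n}\left(p^{k+1}\right)^{i}.$$

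Next, evaluate the geometric sum with ratio $q=p^{k+1}$. Its value is $\sum_{i=1}^n q^i=\frac{q(q^n-1)}{q-1}$, which gives
$$\psi_k(G)=1+\frac{(p-1)\,p^{k}\left(p^{n(k+1)}-1\right)}{p^{k+1}-1}.$$

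Finally, put everything over the common denominator $p^{k+1}-1$. The numerator is
$$p^{k+1}-1+(p-1)p^{k+nk+n}-(p-1)p^k=(p-1)p^{nk+n+k}+p^{k+1}-1-p^{k+1}+p^k .$$
This simplifies to $(p-1)p^{nk+n+k}+p^k-1$, which is exactly the stated formula.

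The only point needing care is the bookkeeping in this last simplification. In particular, the identity element (the $i=0$ term) must be kept separate, since $\varphi(1)=1$ does not follow the pattern $p^{i-1}(p-1)$. No step is a genuine obstacle. As a sanity check I would test $n=1$: the formula should reproduce $1+(p-1)p^k$.
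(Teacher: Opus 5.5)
Your proposal is correct: counting elements by order ($\varphi(p^i)$ elements of order $p^i$), summing the geometric series with ratio $p^{k+1}$, and combining over the denominator $p^{k+1}-1$ gives exactly $\frac{p^{nk+n+k}(p-1)+p^k-1}{p^{k+1}-1}$, and the $n=1$ check does return $1+(p-1)p^k$. The paper gives no proof of its own and simply cites Dey and Mondal; your direct count is the standard derivation of this formula.
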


\begin{lemma}\label{semi-direct-psi-k-lemma}\cite[Remark 2.1]{dey2024exact}
Let $G=P\rtimes H$, where $P$ is a cyclic $p$-group, $|H|>1$ and $(p,|H|)=1$. Then for any positive integer $k$,
$$\psi_k(G)=|P|\psi_k(H)+(\psi_k(P)-|P|)\psi_k(C_H(P)).$$
\end{lemma}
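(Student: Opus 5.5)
We prove Lemma~\ref{semi-direct-psi-k-lemma} by splitting $G$ according to the $P$-component of each element and computing orders directly. Write each $g\in G$ uniquely as $g=xh$ with $x\in P$ and $h\in H$. Since $P\lhd G$, the quotient $G/P\cong H$ gives $o(hP)=o(h)$. Moreover, $\gcd(p,|H|)=1$, so $o(g)=p^a\cdot o(h)$ where $p^a$ is the order of the $p$-part of $g$. Equivalently, with $m=o(h)$, the element $g^m$ lies in $P$ and $p^a=o(g^m)$. The aim is to show that $g^m=1$ unless $h\in C_H(P)$, and to count the contributions in each case.

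The key step is the following. Since $\gcd(m,p)=1$, every element of $G$ whose image in $G/P$ is $h$ is conjugate to an element of $\langle h\rangle$ or has nontrivial $p$-part. More concretely, $g^m=x\cdot x^{h^{-1}}\cdots x^{h^{-(m-1)}}$, a norm-type product in the abelian group $P$. Let $\alpha$ be the automorphism of $P$ induced by $h$. Because $P$ is cyclic, $\mathrm{Aut}(P)$ is abelian, and $\alpha$ has order dividing $m$, coprime to $p$. By coprime action on the cyclic $p$-group, $\alpha$ is either trivial or fixed-point-free on $P$: in $\mathrm{Aut}(\mathbb{Z}_{p^n})$, a $p'$-element acts as multiplication by some $u$, and if $u\not\equiv 1 \pmod p$ then $u-1$ is a unit, so there are no fixed points besides $0$. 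This dichotomy is the main point to verify, and it holds when $p$ is odd; for $p=2$ the $p'$-part of $\mathrm{Aut}(P)$ is trivial, so $\alpha$ is trivial. If $\alpha$ is fixed-point-free, then $1+\alpha+\dots+\alpha^{m-1}$ annihilates $P$ because $(\alpha-1)(1+\dots+\alpha^{m-1})=\alpha^m-1=0$ and $\alpha-1$ is invertible. Hence $g^m=1$ and $o(g)=o(h)$. If $h\in C_H(P)$, then $g^m=x^m$ and $o(g)=o(x)\,o(h)$.

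Summing the $k$-th powers then gives
\[
\psi_k(G)=\sum_{h\notin C_H(P)}|P|\,o(h)^k+\sum_{h\in C_H(P)}\psi_k(P)\,o(h)^k
=|P|\psi_k(H)+(\psi_k(P)-|P|)\psi_k(C_H(P)),
\]
where the second equality comes from adding and subtracting $|P|\psi_k(C_H(P))$. The only delicate step is establishing the trivial-or-fixed-point-free dichotomy; everything else is routine bookkeeping.
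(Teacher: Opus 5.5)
The paper gives no proof of this lemma; it quotes it from Dey and Mondal. Your argument therefore has to stand on its own. It does, apart from one step you left half-done.

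\textbf{The missing half of the dichotomy.} You call the trivial-or-fixed-point-free dichotomy the main point, but you only show one direction: if $h$ acts on $P\cong\mathbb{Z}_{p^n}$ as multiplication by a unit $u\not\equiv 1 \pmod p$, then $\alpha-1$ is invertible. To conclude that every $h\notin C_H(P)$ acts fixed-point-freely, you also need the case $u\equiv 1\pmod p$. There, a unit of $p'$-order must satisfy $u=1$. This holds because the kernel of reduction $(\mathbb{Z}/p^n\mathbb{Z})^\times\to(\mathbb{Z}/p\mathbb{Z})^\times$ consists of the $p^{n-1}$ residues congruent to $1$ mod $p$, so it is a $p$-group and contains no nontrivial $p'$-element.

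Alternatively, Fitting's lemma for coprime action on an abelian group gives $P=C_P(\alpha)\times[P,\alpha]$. Since a cyclic $p$-group is directly indecomposable, one factor is trivial, which gives the dichotomy for every $p$ at once.

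With this added, the rest of your argument is correct:
\begin{itemize}
\item the identity $o(g)=o(h)\,o(g^m)$;
\item the formula $g^m=(1+\alpha+\cdots+\alpha^{m-1})(x)$, and its vanishing when $\alpha-1$ is invertible, since $(\alpha-1)(1+\cdots+\alpha^{m-1})=\alpha^m-1=0$;
\item $o(xh)=o(x)\,o(h)$ for $h\in C_H(P)$, using $\gcd(m,p)=1$;
\item the final add-and-subtract bookkeeping.
\end{itemize}
Your opening sentence, that elements over $h$ are conjugate into $\langle h\rangle$ or have nontrivial $p$-part, is true by Schur--Zassenhaus but plays no role and can be dropped.

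\textbf{A shorter route for the fixed-point-free case.} If $C_P(h)=1$, the $P$-conjugacy class of $h$ has $[P:C_P(h)]=|P|$ elements. It lies in the coset $Ph$, since $y^{-1}hy=h[h,y]$. Hence it equals $Ph$, and every element of $Ph$ has order $o(h)$. This replaces your norm computation and avoids the ring-theoretic factorization. Your version, in turn, makes explicit exactly where the invertibility of $\alpha-1$ is used. Both rest on the same dichotomy.
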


\medskip

A subset $D$ of a real or complex vector space $V$ is convex if for all $x,y\in D$ and $0\le t\le 1$, $(1-t)x+ty\in D$. A real-valued function $f$ defined on a convex subset $D$ of a vector space $V$ is convex if the set 
$$U_f=\lbrace{(x,\lambda): x\in D, \lambda\ge f(x)}\rbrace\subseteq V\times\mathbb{R}$$
of points on and above the graph of $f$ is convex. That is, if $x,y\in D$, and $t\in [0,1]$, then
$$f(tx+(1-t)y)\le tf(x)+(1-t)f(y).$$
For a convex set $D\subseteq\mathbb{R}$, if a function $f:\mathbb{R}\to\mathbb{R}$ satisfies $f^{\prime\prime}(x)\ge 0$ for all $x\in D$, then $f$ is convex on $D$. The following inequality is known as \textit{Jensen's inequality} or \textit{Power mean inequality}.


\begin{lemma}\cite[Proposition 4.1.2]{garling2010inequalities}\label{Jensen-inequality}
Let $f:\mathbb{R}\to\mathbb{R}$ be a convex function on a convex subset $D$ of $\mathbb{R}$ and let $a_1,\ldots,a_n\in\mathbb{R}_{>0}$ be such that $\sum_{i=1}^na_i=1$. Then for any $x_1,\ldots,x_n\in D$,
$$f\left(\sum_{i=1}^na_ix_i\right)\le\sum_{i=1}^na_if(x_i).$$
\end{lemma}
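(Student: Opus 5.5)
The statement just above is Jensen's inequality (Lemma \ref{Jensen-inequality}), quoted from \cite{garling2010inequalities}. I would prove it by induction on $n$, using only the two-point definition of convexity stated before the lemma:
$$f(tx+(1-t)y)\le tf(x)+(1-t)f(y) \quad \text{for } x,y\in D,\ t\in[0,1].$$

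For $n=1$ we have $a_1=1$, and the inequality is an equality. For $n=2$ we have $a_2=1-a_1$, and the claim is exactly the convexity definition with $t=a_1$.

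For the inductive step, assume the result holds for $n-1\ge 1$ weights, and take $a_1,\ldots,a_n>0$ with $\sum a_i=1$ and points $x_i\in D$.
\begin{enumerate}
\item Put $s=\sum_{i=1}^{n-1}a_i$. Since $a_n>0$, we have $0<s<1$.
\item Define the averaged point $y=\sum_{i=1}^{n-1}\frac{a_i}{s}x_i$. The coefficients $a_i/s$ are positive and sum to $1$.
\item Check that $y\in D$. A convex combination of finitely many points of a convex set lies in the set; this follows by the same induction applied to the convexity of $D$. Alternatively, for $D\subseteq\mathbb{R}$, note that $y$ lies between $\min x_i$ and $\max x_i$, and a convex subset of $\mathbb{R}$ is an interval.
\item Write $\sum_{i=1}^n a_ix_i = s\,y+(1-s)x_n$.
\item Apply two-point convexity with $t=s$ to get
$$f\Bigl(\sum_{i=1}^n a_ix_i\Bigr)\le s f(y)+(1-s)f(x_n).$$
\item Apply the inductive hypothesis to $y$: $f(y)\le\sum_{i=1}^{n-1}\frac{a_i}{s}f(x_i)$.
\item Multiply by $s$ and substitute, giving $\sum_{i=1}^n a_if(x_i)$ on the right. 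This completes the induction.
\end{enumerate}

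The argument is routine. The only point needing care is step 3, verifying that the intermediate point $y$ lies in $D$, so that $f(y)$ is defined and the convexity inequality applies. This is handled by the interval argument above.

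In the rest of the paper, the lemma will be applied with a power function such as $f(x)=x^k$ on $D=(0,\infty)$. There convexity follows from $f''(x)=k(k-1)x^{k-2}\ge 0$, as the paper notes just before the statement.
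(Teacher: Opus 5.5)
Your induction is correct and is the standard proof. The paper gives no argument of its own; it simply quotes the result from Garling.

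You could also note that, with the epigraph definition of convexity the paper reproduces just before the lemma, your step~3 already contains the whole proof. The points $(x_i,f(x_i))$ lie in the convex set $U_f\subseteq\mathbb{R}^2$, so closure of convex sets under finite convex combinations (the fact you establish for $D$) puts $\bigl(\sum_i a_ix_i,\sum_i a_if(x_i)\bigr)$ in $U_f$. That is exactly the desired inequality.
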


\begin{lemma}\label{threshold-inequality-lemma}
Let $k$ be a positive integer. Then the following hold.
\begin{enumerate}
\item[$(a).$] $\psi^{\prime\prime}_k(\mathbb{Z}_2\times\mathbb{Z}_2)\ge\left(\frac{7}{16}\right)^k$.
\item[$(b).$] $\psi^{\prime\prime}_k(S_3)\ge\left(\frac{13}{36}\right)^k$.
\item[$(c).$] $\psi^{\prime\prime}_k(Q_8)\ge\left(\frac{27}{64}\right)^k$.
\item[$(d).$] $\psi^{\prime\prime}_k(A_4)\ge\left(\frac{31}{144}\right)^k$.
\item[$(e).$] $\psi^{\prime\prime}_k(A_5)\ge\left(\frac{211}{3600}\right)^k$.
\end{enumerate}
\end{lemma}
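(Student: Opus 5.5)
The plan is to prove all five inequalities at once with Jensen's inequality (Lemma \ref{Jensen-inequality}), applied to the convex function $f(x)=x^k$ on $D=[0,\infty)$. Convexity holds since $f''(x)=k(k-1)x^{k-2}\ge 0$ there; for $k=1$ the function is linear and every inequality is an equality. Fix a finite group $G$ of order $n$ and use the uniform weights $a_g=\frac{1}{n}$, one for each $g\in G$, together with the points $x_g=\frac{o(g)}{n}\in D$.

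Jensen's inequality then gives
$$\left(\frac{1}{n}\sum_{g\in G}\frac{o(g)}{n}\right)^k\le\frac{1}{n}\sum_{g\in G}\left(\frac{o(g)}{n}\right)^k.$$
The left-hand side equals $\left(\frac{\psi(G)}{n^2}\right)^k=\psi''(G)^k$. The right-hand side equals $\frac{\psi_k(G)}{n^{k+1}}=\psi''_k(G)$. We therefore obtain the general inequality
$$\psi''_k(G)\ge\psi''(G)^k \quad \text{for every finite group } G \text{ and every } k\ge 1.$$
This is just the power mean inequality for the random variable $o(x)/n$ with $x$ uniform in $G$. Note that Jensen's inequality as stated needs the weights to sum to $1$, and $n$ weights of $\frac1n$ do.

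It remains to specialise $G$ to $\mathbb{Z}_2\times\mathbb{Z}_2$, $S_3$, $Q_8$, $A_4$ and $A_5$. The values $\psi''(G)=\frac{7}{16},\frac{13}{36},\frac{27}{64},\frac{31}{144},\frac{211}{3600}$ are those recorded in Theorem \ref{thresholds-for-psi-double-prime}. As a check, they follow from the order statistics already used in the closed forms of Theorems \ref{nilpotent-theorem} and \ref{main-theorem}; for instance, $A_5$ has $1,15,20,24$ elements of orders $1,2,3,5$, which gives $\psi(A_5)=211$. This yields (a)–(e).

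There is no real obstacle in this argument. The only care needed is to confirm that the five values of $\psi''$ match the element-order counts.
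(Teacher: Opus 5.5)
Your proof is correct and is essentially the paper's argument: the paper also applies Jensen's inequality to $f(x)=x^k$, weighting each element order $d$ by the proportion of elements of that order (for $A_5$ it uses the points $60\cdot d$ instead of your $d/60$, which is only a rescaling). Your version has the small advantage of isolating the general inequality $\psi''_k(G)\ge\psi''(G)^k$ for every finite group $G$, so all five parts follow at once from the known values of $\psi''$ rather than being checked one at a time.
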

\pf We only prove part $(e)$. The proofs of the rest follow similarly. First observe that the inequality in $(e)$ is equivalent to showing
$$\frac{60^{k+1}}{1+15\cdot 2^k+20\cdot 3^k+24\cdot 5^k}\le\frac{3600^k}{211^k},$$
i.e.,
$$\frac{60}{1+15\cdot 2^k+20\cdot 3^k+24\cdot 5^k}\le\frac{60^k}{211^k},$$
i.e.,
$$60\cdot 211^k\le 60^k+15\cdot 120^k+20\cdot 180^k+24\cdot 300^k,$$
i.e.,
$$211^k\le\frac{1}{60}\cdot 60^k+\frac{15}{60}\cdot 120^k+\frac{20}{60}\cdot 180^k+\frac{24}{60}\cdot 300^k.$$
Note that for a positive integer $k$, the function $f:\mathbb{R}\to\mathbb{R}$ defined as $f(x)=x^k$ is a convex function on $(0,\infty)$, since $f^{\prime\prime}(x)\ge 0$ for $x>0$. Let $a_1=\frac{1}{60},a_2=\frac{15}{60},a_3=\frac{20}{60},a_4=\frac{24}{60}$ and let $x_1=60,x_2=120,x_3=180,x_4=300$. Then from Lemma \ref{Jensen-inequality}, we have
$$f\left(\frac{1}{60}\cdot 60+\frac{15}{60}\cdot 120+\frac{20}{60}\cdot 180+\frac{24}{60}\cdot 300\right)\le \frac{1}{60}f(60)+\frac{15}{60}f(120)+\frac{20}{60}f(180)+\frac{24}{60}f(300),$$
i.e.,
$$211^k\le\frac{1}{60}\cdot 60^k+\frac{15}{60}\cdot 120^k+\frac{20}{60}\cdot 180^k+\frac{24}{60}\cdot 300^k.$$
\qed

\subsection{Proof of Theorem \ref{nilpotent-theorem}}

\medskip

To prove the threshold for nilpotent groups we prove the following lemmas first.

\begin{lemma}\label{reducing-quotient}
Let $G$ be a finite group such that $G\cong A \rtimes H$ and $M$ be a subgroup of $H$, which is normal in $G$. Then
$$G/M \cong A \rtimes (H/M).$$
Moreover, if $G$ is nontrivial semidirect product, then $G/M$ is as well and vice versa.
\end{lemma}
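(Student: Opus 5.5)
The plan is to compose the natural projection with a restriction. Write $G = AH$ with $A \lhd G$, $H \le G$ and $A \cap H = 1$. Let $M \le H$ be normal in $G$. Consider the natural projection $\pi: G \to G/M$.

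\textbf{The decomposition of $G/M$.} First I would check that $AM/M \cong A$. Since $M \le H$, we have $A \cap M \le A \cap H = 1$, so this follows from the second isomorphism theorem. Next, $AM/M = \pi(A)$ is normal in $G/M$, because $A \lhd G$. Also $HM/M = H/M$ is a subgroup of $G/M$. Their product is $\pi(AH) = G/M$. It remains to show the intersection $\pi(A) \cap \pi(H)$ is trivial. Suppose $aM = hM$ with $a \in A$ and $h \in H$. Then $a \in hM \subseteq H$, so $a \in A \cap H = 1$. Hence $G/M = \pi(A) \rtimes \pi(H) \cong A \rtimes (H/M)$.

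\textbf{Identifying the action.} I would verify that $H/M$ acts on $A \cong \pi(A)$ via $hM \cdot a = hah^{-1}$. This is well defined only if $M$ acts trivially on $A$, and that step needs checking. For $m \in M$ and $a \in A$, the commutator $[m,a]$ lies in $A$ since $A \lhd G$. It also lies in $M$ since $M \lhd G$. So $[m,a] \in A \cap M = 1$. Thus $M \le C_H(A)$, and the action descends to $H/M$. The isomorphism $\pi|_A: A \to \pi(A)$ is equivariant, which gives the stated form of $G/M$ as $A \rtimes (H/M)$ with the induced action.

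\textbf{The ``moreover'' part.} This is the step I expect to need the most care, since ``nontrivial'' has to mean that the action is nontrivial, i.e.\ the product is not direct. The semidirect product $G$ is direct exactly when $H$ centralizes $A$. Likewise, $G/M$ is direct exactly when $H/M$ centralizes $\pi(A)$. By the previous step, $M$ always centralizes $A$. So $H/M$ acts trivially on $\pi(A) \cong A$ if and only if $H$ acts trivially on $A$. Hence $G/M$ is a nontrivial semidirect product if and only if $G$ is.
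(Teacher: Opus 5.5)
Your proposal is correct and follows the paper's proof of the decomposition essentially verbatim. You project via $G\to G/M$, note $AM/M\cong A$ because $A\cap M\le A\cap H=1$, and show $\pi(A)\cap\pi(H)$ is trivial since $aM=hM$ forces $a\in H$. For the ``moreover'' part, which the paper simply calls obvious, your observation that $[M,A]\le A\cap M=1$ supplies the missing justification: the action of $H$ factors through $H/M$ with the same image in $\operatorname{Aut}(A)$. This holds under the paper's intended reading of ``nontrivial'' as ``nontrivial action''.
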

\pf  Consider the natural homomorphism $\tau : G \rightarrow G/M=G_1$ (say). Then $\tau(A)=AM/M \cong A/A\cap M \cong A \leq G_1$ (since $A \cap H$ is trivial) and $\tau(H)=HM/M \cong H/M \leq G_1$ (since $M\leq H$). Now since $\tau$ is an onto homomorphism and $G=AH$ and $A$ is normal in $G$, then $\tau(A)$ is normal in $G_1$ and ${G_1}={\tau(A)}{\tau(H)}$. Let $gM \in \tau(A) \cap \tau(H)$. Then there exists $x \in A$ and $y \in H$ such that $\tau(x)=gM=\tau(y)$ i.e. $x{y^{-1}} \in M \leq H$ and hence, $x \in H$. Thus $x=e$ and consequently $gM=M$. So $G/M \cong \tau(A) \rtimes \tau(H)$ i.e.,
$$G/M \cong A \rtimes (H/M).$$
The second part of this proof is obvious. \qed 

\begin{lemma} \label{psi''_k for semi-direct}
Let $G=\mathbb{Z}_{p^n}\rtimes\mathbb{Z}_2$ be a nontrivial semi-direct product, where $p$ is an odd prime and $n$ is a positive integer. Then
\begin{itemize}
\item $\psi^{\prime\prime}_k(G)\le\frac{1+3\cdot 2^k+2\cdot 3^k}{6^{k+1}}$ for $k=1,2$.
\item $\psi^{\prime\prime}_k(G)<\frac{1}{2^{k+1}}$ for $k\ge 3$.
\end{itemize}
\end{lemma}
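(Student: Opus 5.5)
The plan is to compute $\psi_k(G)$ exactly and then reduce both claims to an elementary inequality in $p$, $n$ and $k$. Write $P=\mathbb{Z}_{p^n}$ and $H=\mathbb{Z}_2$.

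\textbf{Step 1: the exact formula.} Since $\operatorname{Aut}(P)$ is cyclic for odd $p$, it has a unique involution, namely inversion. So the nontrivial action of $H$ is inversion, and $C_H(P)=1$. Lemma \ref{semi-direct-psi-k-lemma} then gives
$$\psi_k(G)=p^n(1+2^k)+\psi_k(P)-p^n=2^kp^n+\psi_k(P).$$
This matches the direct count: the $p^n$ elements outside $P$ are involutions. Dividing by $(2p^n)^{k+1}$ gives
$$\psi''_k(G)=\frac{1}{2p^{nk}}+\frac{\psi''_k(\mathbb{Z}_{p^n})}{2^{k+1}}.$$

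\textbf{Step 2: the case $k\ge 3$.} By Lemma \ref{cyclic-p-group-lemma},
$$1-\psi''_k(\mathbb{Z}_{p^n})=\frac{(p^k-1)\left(1-p^{-n(k+1)}\right)}{p^{k+1}-1}.$$
The target $\psi''_k(G)<1/2^{k+1}$ is therefore equivalent to
$$\frac{2^k}{p^{nk}}<\frac{(p^k-1)\left(1-p^{-n(k+1)}\right)}{p^{k+1}-1}.$$
As $n$ grows, the left side decreases and the right side increases, so it suffices to treat $n=1$. There the right side simplifies to $(p^k-1)/p^{k+1}$, and the inequality becomes $p^k-1>p\cdot 2^k$, i.e. $(p/2)^k>p+2^{-k}$.

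For $p=3$ and $k=3$ this reads $26>24$. The left side $(3/2)^k$ grows with $k$ while the right side is bounded by $p+1$, so the case $p=3$ holds for all $k\ge3$. For $p\ge5$ it already holds for $k\ge2$. The mild obstacle is handling $p=3$ carefully, since it is tight at $k=3$ and in fact fails at $k=2$. This failure is why $k=1,2$ must be treated separately.

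\textbf{Step 3: the cases $k=1,2$.} Here I will show that $F(p,n)=\frac{1}{2p^{nk}}+\frac{\psi''_k(\mathbb{Z}_{p^n})}{2^{k+1}}$ is maximized at $(p,n)=(3,1)$, where $G=S_3$ and $F$ equals $\frac{1+3\cdot2^k+2\cdot3^k}{6^{k+1}}$.

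Equivalently, I want
$$F(3,1)-F(p,n)\ge 0.$$
Using Step 2's expression, $F(p,n)=\frac{1}{2^{k+1}}+\frac{1}{2^{k+1}}\left(\frac{2^k}{p^{nk}}-D(p,n)\right)$, where $D(p,n)=1-\psi''_k(\mathbb{Z}_{p^n})$. So it suffices to show that $\frac{2^k}{p^{nk}}-D(p,n)$ is maximal at $(3,1)$.

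The first term $2^k/p^{nk}$ decreases in both $p$ and $n$. For $D(p,n)$, increasing $n$ strictly increases it, as in Step 2. For the dependence on $p$, write $D(p,1)=(p^k-1)/p^{k+1}$ for $k=1,2$. This is $(p-1)/p^2$ or $(p^2-1)/p^3$, and both are decreasing in $p$ for $p\ge3$. So monotonicity in $p$ alone does not settle the question. Instead I will bound directly: for $p\ge5$ and $n=1$, the difference $\frac{2^k}{p^k}-\frac{p^k-1}{p^{k+1}}$ is at most its value at $p=5$. I will check explicitly that this is below the value at $p=3$, and handle $n\ge2$ by the monotonicity in $n$.

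I expect this small finite verification for $k=1,2$ to be the only genuinely computational part of the argument.
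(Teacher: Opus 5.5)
Your Steps 1 and 2 are correct and follow the paper's argument. The paper derives the same closed form from Lemma \ref{semi-direct-psi-k-lemma}, reduces the case $k\ge 3$ to $n=1$ by the same monotonicity in $n$, and arrives at the same inequality $p^k-2^kp-1>0$.

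For $k=1,2$ the paper gives no argument of its own; it cites T{\u{a}}rn{\u{a}}uceanu's Lemma 2.3. So your Step 3 is your own route, and it contains a false step. Write
$g(p,1)=\frac{2^k}{p^k}-\frac{p^k-1}{p^{k+1}}=\frac{2^kp-p^k+1}{p^{k+1}}$.
You claim that $g(p,1)\le g(5,1)$ for all primes $p\ge 5$.
\begin{itemize}
\item For $k=1$ this holds, since $g(p,1)=\frac{p+1}{p^2}$ is decreasing.
\item For $k=2$ it fails. Here $g(p,1)=\frac{-p^2+4p+1}{p^3}$, whose derivative is $\frac{p^2-8p-3}{p^4}$. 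So $g$ decreases only up to $p=4+\sqrt{19}\approx 8.4$ and then increases towards $0$. For instance, $g(29,1)=-\frac{724}{24389}\approx -0.0297$, which exceeds $g(5,1)=-\frac{4}{125}=-0.032$.
\end{itemize}
Hence the single check at $p=5$ does not cover all primes $p$ when $k=2$.

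The repair is already in your Step 2. The condition $g(p,1)<0$ is exactly the inequality $p^k-2^kp-1>0$, which you proved for $p\ge 5$ and $k\ge 2$. Therefore, for $k=2$ and $p\ge 5$,
$g(p,n)\le g(p,1)<0<\frac{4}{27}=g(3,1)$,
that is, $\psi''_2(G)<\frac18<\frac{31}{216}=\psi''_2(S_3)$. Combine this with $g(3,n)\le g(3,1)$ (from monotonicity in $n$) and with the genuine monotonicity in $p$ for $k=1$. This completes Step 3 and gives a self-contained proof of the case $k=1,2$, which the paper only cites.
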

\pf By Lemma \ref{semi-direct-psi-k-lemma}, we have
$$\psi_k(G)=\psi_k(\mathbb{Z}_{p^n}\rtimes\mathbb{Z}_2)=p^n\psi_k(\mathbb{Z}_2)+(\psi_k(\mathbb{Z}_{p^n})-p^n)\psi_k(C_{\mathbb{Z}_2}(\mathbb{Z}_{p^n})).$$
Since the semi-direct product is non-trivial, we have $C_{\mathbb{Z}_2}(\mathbb{Z}_{p^n})=\lbrace{1}\rbrace$ and so
$$\psi_k(G)=(1+2^k)p^n+\frac{p^{nk+n+k}(p-1)+p^k-1}{p^{k+1}-1}-p^n,$$
i.e.,
$$\psi^{\prime\prime}_k(G)=\frac{p^{nk+n+k+1}+2^kp^{n+k+1}+p^k-p^{nk+n+k}-2^kp^n-1}{2^{k+1}(p^{nk+n+k+1}-p^{nk+n})}.$$
The proof of $\psi^{\prime\prime}_k(G)\le\frac{1+3\cdot 2^k+2\cdot 3^k}{6^{k+1}}$ for $k=1,2$ goes identical with that of \cite[Lemma 2.3]{tuarnuauceanu_israel}. The equality $\psi^{\prime\prime}_k(G)=\frac{1+3\cdot 2^k+2\cdot 3^k}{6^{k+1}}$ for $k=1,2$ holds if and only if $p=3$ and $n=1$, i.e., $G\cong S_3$. To show $\psi^{\prime\prime}_k(G)<\frac{1}{2^{k+1}}$ for $k\ge 3$ is equivalent to showing 
$$p^{nk+n+k+1}+2^k p^{n+k+1}+p^k-p^{nk+n+k}-2^k p^n-1<p^{nk+n+k+1}-p^{nk+n},$$
i.e.,
$$2^k p^{n+k+1}+p^k-2^k p^n- 1<p^{nk+n+k}-p^{nk+n},$$
i.e.,
$$2^k p^n(p^{k+1}-1)<(p^{nk+n}- 1)(p^k-1),$$
i.e.,
$$2^k\cdot\frac{p^{k+1}-1}{p^k-1}<p^{nk}-\frac{1}{p^n}.$$
Since $n\ge 1$, $p^{nk}\ge p^k$ and $p^n\ge p$. Hence $p^{nk}-\frac{1}{p^n}\ge p^k-\frac{1}{p}$. We show that 
$$2^k\cdot\frac{p^{k+1}-1}{p^k-1}<p^k-\frac{1}{p},$$
which is equivalent to showing
$$p^k-2^kp-1>0,$$
which can be verified by induction for all $p\ge 3$ and $k\ge 3$. Hence we have $\psi^{\prime\prime}_k(G)<\frac{1}{2^{k+1}}$ for $k\ge 3$ as desired.\qed

\medskip

\noindent\textbf{Proof of Theorem \ref{nilpotent-theorem}:}
Let, if possible, $G$ be a counterexample of minimum order, i.e., $G$ is a non-nilpotent group of minimum order such that
\begin{itemize}
\item $\psi_k''(G)>\frac{1+3\cdot 2^k+2\cdot 3^k}{6^{k+1}}$ for $k=1,2$.
\item $\psi_k''(G)\ge \frac{1}{2^{k+1}}$ for $k\ge 3$.
\end{itemize}
Since 
$$\frac{1}{|G|}\sum_{g \in G} o(g)^k>\left(\frac{1+3\cdot 2^k+2\cdot 3^k}{6^{k+1}}\right)|G|^k\quad \text{for} \quad k=1,2$$
and
$$\frac{1}{|G|}\sum_{g \in G} o(g)^k\ge\left(\frac{1}{2^{k+1}}\right)|G|^k \quad \text{for} \quad k\ge 3,$$
there exists $x \in G$ such that
$$o(x)>\left(\frac{1+3\cdot 2^k+2\cdot 3^k}{6^{k+1}}\right)^{1/k}|G|\quad \text{for} \quad k=1,2$$
and
$$o(x)\ge\left(\frac{1}{2^{k+1}}\right)^{1/k}|G|\quad \text{for} \quad k\ge 3.$$
Let $H=\langle{x}\rangle$. Then we have for $k=1,2$,
$$[G:H]<\left(\frac{6^{k+1}}{1+3\cdot 2^k+2\cdot 3^k}\right)^{1/k}\le\frac{36}{13}<3,$$
where the second inequality follows from Lemma \ref{threshold-inequality-lemma} (b). And for $k\ge 3$,
$$[G:H]\le\left({2^{k+1}} \right)^{1/k}<3.$$
Hence we get
$$[G:H]\le 2.$$
Since $G$ is non-nilpotent, $[G:H]=2$, i.e., $H$ is normal in $G$ and also $\pi(G)>1$. By Schur-Zassenhaus theorem, $G\cong \mathbb{Z}_t\rtimes M$, where $M$ is a Sylow $2$-subgroup of $G$ of order $2^n$. Note that $\mathbb{Z}_t$ is the internal direct product of all the cyclic normal Sylow subgroups (mentioned above) of $G$. As $G$ is non-nilpotent, at least one of those cyclic normal Sylow subgroups, say  Sylow $p_1$-subgroup $P_1$, does not commute with $M$. Now, if $\mathbb{Z}_t$ contains another cyclic normal Sylow $p_2$-subgroup, say $P_2$, then $G/P_2\cong \mathbb{Z}_u \rtimes M$, where $u=t/|P_2|$ and $G/P_2$ is non-nilpotent. By Lemma \ref{product-quotient} (1), we have $\psi_k''(G/P_2)\geq \psi_k''(G) $ which leads to a contradiction to minimality of $G$ as a counterexample. Hence without loss of generality, $$G\cong \mathbb{Z}_{p^m}\rtimes M,$$ 
where the semidirect product is non-trivial. Let $K\leq H$ such that $|K|=2^{n-1}$. As $K$ is a characteristic subgroup of $H$ and $H$ is normal in $G$, $K$ is normal in $G$. Thus, by Lemma \ref{reducing-quotient}, we get a non-trivial semidirect product 
$$G/K\cong \mathbb{Z}_{p^m}\rtimes M/K.$$
By the minimality of $G$, we have $|K|=1$ and hence
$$G\cong \mathbb{Z}_{p^m}\rtimes \mathbb{Z}_2,$$
which contradicts the Lemma \ref{psi''_k for semi-direct}.\qed

\begin{remark}\label{assym-tight}
For the group $G=D_p$, where $p$ is a prime, we have
$$\psi''_k(D_p)=\frac{p^{k+1}-p^k+2^kp+1}{(2p)^{k+1}}.$$
Thus
$$\lim_{p\to\infty}\psi''_k(D_p)=\frac{1}{2^{k+1}}.$$
This shows that the threshold for nilpotency given by Theorem \ref{nilpotent-theorem} is asymptotically tight, for $k\ge 3$.
\end{remark}

\subsection{Proof of Theorem \ref{main-theorem}}

\medskip

\begin{theorem}\label{lucchini-theorem}\cite[Theorem 2.20]{isaacs2008_book}
Let $A$ be a cyclic proper subgroup of a finite group $G$, and let $K = \operatorname{Core}_G(A)$, then $[A:K]<[G:A]$, and in particular, if $|A|\geq [G:A]$, then $K$ is non-trivial.
\end{theorem}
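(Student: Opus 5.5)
The plan is to use the core-free action on cosets and an elementary counting argument to bound $[A:K]$. Let $n=[G:A]$, so $n\ge 2$ because $A$ is proper. Let $G$ act by left multiplication on the set $\Omega$ of left cosets of $A$. The kernel of this action is $K=\operatorname{Core}_G(A)=\bigcap_{g}A^g$. Hence $A/K$ embeds in $\operatorname{Sym}(\Omega)$ and, since $A$ fixes the point $A\in\Omega$, it embeds in $\operatorname{Sym}(n-1)$ acting on the other $n-1$ cosets. Writing $A=\langle a\rangle$, the group $A/K$ is cyclic, generated by the image of $a$, which is a permutation of $n-1$ points. Thus $[A:K]$ equals the order of this permutation, i.e.\ the lcm of its cycle lengths $\ell_1,\dots,\ell_r$ with $\sum \ell_i\le n-1$.

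The crude bound of $n-1$ points does not immediately give lcm$<n$, since the lcm of cycle lengths can exceed their sum (e.g.\ $2+3$ gives $6$). So the key extra input is that $A$ is \emph{cyclic} and acts on the cosets in a structured way. The cleaner route is to exploit the fact that the orbits of $A$ on $\Omega$ are double cosets $AgA$, and the orbit of $gA$ has length $[A:A\cap A^g]$. Because $A$ is cyclic, $A\cap A^g$ is the unique subgroup of $A$ of its order, so it is characteristic in $A$.

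The main step is then to choose a single $g$ whose orbit alone forces everything. Let $d=\min_g |A\cap A^g|$, attained at $g_0$, and set $B=A\cap A^{g_0}$. Since all subgroups of the cyclic group $A$ are totally ordered by divisibility, I would argue that $K$ is an intersection of finitely many subgroups $A\cap A^{g}$, each a subgroup of $A$. In a cyclic group the intersection of subgroups is the one of smallest order only when those orders form a divisibility chain, which need not hold. So instead I use that the intersection of subgroups of orders $m_i$ has order $\gcd(m_i)$. This gives $[A:K]=\operatorname{lcm}_g [A:A\cap A^g]=\operatorname{lcm}$ of the $A$-orbit lengths on $\Omega\setminus\{A\}$. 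This still leaves the lcm-versus-sum obstacle, which I expect to be the hardest point.

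To overcome it, I would try to show that a single orbit of length $[A:K]$ exists, i.e.\ some $g$ with $A\cap A^g=K$. Then that orbit lies in $\Omega\setminus\{A\}$ and gives $[A:K]\le n-1<n$. For this I would use that $A\cap A^g$ is normal in $A$ and, being characteristic in $A$, is normalized by $N_G(A)$. I would then argue by induction on $|G|$, replacing $G$ by $G/K$ so that $A$ is core-free. In the core-free case the task is to find $g$ with $A\cap A^g=1$. Suppose instead every $A\cap A^g$ contains the unique subgroup $P$ of prime order $p$ inside some maximal pieces. Then I would use that the minimal subgroups of the cyclic group $A$, one of prime order $p$ for each prime dividing $|A|$, are each characteristic. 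If every conjugate of $A$ met $A$ nontrivially, a counting or pigeonhole argument over the primes would produce a nontrivial normal subgroup of $G$ inside $A$, contradicting core-freeness. I expect this prime-by-prime step to be the main difficulty. If it resists, the fallback is the standard Isaacs argument via an element whose orbit realizes the full order. The final clause then follows at once: if $K=1$, then $|A|=[A:K]<[G:A]$.
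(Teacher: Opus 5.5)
The paper gives no proof of this statement; it simply cites Isaacs's Theorem 2.20. Several parts of your proposal are fine: the reduction to $K=1$, the description of $A$-orbits on $G/A$ via double cosets, and the formula $[A:K]=\operatorname{lcm}_g[A:A\cap A^g]$. You also correctly identify the crux, namely the gap between the lcm and the sum of the cycle lengths. The genuine gap is the step meant to close it. You claim that a core-free cyclic $A$ always has a conjugate with $A\cap A^g=1$ (equivalently, that $A/K$ has a regular orbit on the cosets of $A$). This claim is false, so the ``prime-by-prime pigeonhole'' cannot be carried out. Your fallback (``an element whose orbit realizes the full order'') is the same false claim, not an argument. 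To see what goes wrong, let $P_\ell$ be the subgroup of prime order $\ell$ in $A$. Then $P_\ell\le A^g$ if and only if $g\in N_G(P_\ell)$, so $\{g: A\cap A^g\neq 1\}=\bigcup_{\ell\mid |A|}N_G(P_\ell)$, which is a union of proper subgroups when $A$ is core-free. No group is a union of two proper subgroups, so your approach does prove the theorem when $[A:K]$ has at most two prime divisors (in particular for $p$-groups). However, a group can be a union of three proper subgroups, and then the claim fails.

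Concretely, let $p,q,r$ be distinct primes and $M=\langle x_1,x_2\rangle\times\langle y_1,y_2\rangle\times\langle z_1,z_2\rangle\cong\mathbb{Z}_p^2\times\mathbb{Z}_q^2\times\mathbb{Z}_r^2$. Let $V=\langle u,v\rangle\cong\mathbb{Z}_2\times\mathbb{Z}_2$ act on $M$ as follows: $u$ fixes $x_1,x_2$ and swaps $y_1\leftrightarrow y_2$, $z_1\leftrightarrow z_2$; $v$ fixes $y_1,y_2$ and swaps $x_1\leftrightarrow x_2$, $z_1\leftrightarrow z_2$. In $G=M\rtimes V$ put $A=\langle x_1y_1z_1\rangle\cong\mathbb{Z}_{pqr}$. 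Since $M$ is abelian, every conjugate of $A$ is $A^w$ for some $w\in V$. Moreover $A\cap A^u=\langle x_1\rangle$, $A\cap A^v=\langle y_1\rangle$ and $A\cap A^{uv}=\langle z_1\rangle$. Hence $\operatorname{Core}_G(A)=1$, yet $A\cap A^g\neq 1$ for every $g\in G$; indeed $G=M\langle u\rangle\cup M\langle v\rangle\cup M\langle uv\rangle$ is exactly the union of the three normalizers. On the $4pqr$ cosets of $A$, the generator $x_1y_1z_1$ has order $pqr$ but only cycles of lengths $1$, $qr$, $pr$ and $pq$. The inequality $pqr<4pqr$ still holds, but the slack comes from elsewhere (for example, $N_G(A)=M$ is large), not from a single long orbit. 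So any correct proof must bound $[A:K]$ by a genuinely different mechanism that handles this situation. Exhibiting one orbit of length $[A:K]$ cannot work in general.
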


\begin{theorem}\label{prime-power-index-solvable-theorem}\cite[Theorem 13.10.1]{scott1987group}
Suppose that a finite group $G$ contains a subgroup $B$ of prime
power index and $B$ contains a cyclic subgroup $H$ of index $[B:H]\le 2$. Then $G$ is solvable.
\end{theorem}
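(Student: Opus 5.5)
Since this result is quoted from \cite[Theorem 13.10.1]{scott1987group}, the plan below reconstructs a proof outline rather than the one given there. It rests on two outside results: a product theorem for factorized groups (Kegel--Wielandt, or Itô's metabelian theorem), and, as a fallback, Guralnick's classification of prime-power index subgroups of simple groups, which depends on the classification of finite simple groups.

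\textbf{Step 1: factorize $G$.} Write $[G:B]=p^a$ and let $P$ be a Sylow $p$-subgroup of $G$ chosen so that $B\cap P$ is a Sylow $p$-subgroup of $B$. Then
$$|BP|=\frac{|B|\,|P|}{|B\cap P|}=\frac{|B|\,|G|_p}{|B|_p}=|B|\,p^a=|G|,$$
so $G=BP$. The case $[B:H]=1$ is then the classical situation of a product of a cyclic group and a $p$-group. For $[B:H]=2$, note that $H$ is normal in $B$ and $B$ is cyclic-by-$\mathbb{Z}_2$, hence metacyclic and solvable. So in every case $G$ is a product of a $p$-group and a group that is cyclic or has a cyclic subgroup of index $2$.

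\textbf{Step 2: minimal counterexample.} Take $G$ non-solvable of minimum order. The hypotheses pass to quotients: for $N\lhd G$, the subgroup $BN/N$ has prime-power index in $G/N$, and $HN/N$ is cyclic of index at most $2$ in it. Hence every proper quotient of $G$ is solvable. It follows that $G$ has a unique minimal normal subgroup $N$, that $N$ is non-abelian (otherwise $G$ would be solvable), and so $N\cong S^r$ for a non-abelian simple group $S$.

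\textbf{Step 3: contradiction inside $N$.} First apply Theorem \ref{lucchini-theorem} to the cyclic subgroup $H$. If $\operatorname{Core}_G(H)\neq 1$, it is a nontrivial cyclic normal subgroup, hence contains an abelian minimal normal subgroup, contradicting Step 2. So $\operatorname{Core}_G(H)=1$, and then $|H|<[G:H]\le 2p^a$, which means $B$ is small relative to $P$. Next, intersect the factorization with $N$: $N$ is covered by $(B\cap N)$ and a Sylow $p$-subgroup of $N$, up to the usual index adjustments for factorized groups. The aim is to conclude that $S$ has a subgroup of prime-power index that contains a cyclic subgroup of index at most $2$.

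\textbf{Main obstacle.} The hard part is excluding the simple group $S$ in Step 3. My first attempt would be a Kegel--Wielandt type argument. Set $L=HP$, which has index at most $2$ in $G$ when it is a subgroup, and try to show directly that a product of a cyclic group and a $p$-group is solvable, via Itô's theorem when $P$ is abelian or by induction on $|P|$ in general. If that fails, I would invoke Guralnick's list of simple groups with a subgroup of prime-power index: $A_n$ with $A_{n-1}$, $\mathrm{PSL}_n(q)$ with a parabolic subgroup, $\mathrm{PSL}_2(11)$ with $A_5$, $M_{23}$, $M_{11}$, and $\mathrm{PSU}_4(2)$. I would then check case by case that none of these subgroups contains a cyclic subgroup of index at most $2$, except in tiny cases that are solvable anyway.
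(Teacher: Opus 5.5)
Your proposal only closes through the Guralnick fallback; the first route has a genuine gap. Kegel--Wielandt applied to $L=HP$ handles $B=H$. It also handles $p=2$, since then $|H\cap P|\le |H|_2$ forces $|HP|=|G|$, so $G=HP$. But for odd $p$ and $[B:H]=2$, the set $HP$ has $|G|/2$ elements and need not be a subgroup. For instance, in $S_4$ take $p=3$, $B=D_8$, $H=\mathbb{Z}_4$ and $P=\mathbb{Z}_3$: then $HP$ has $12$ elements and contains a $4$-cycle, so it is not $A_4$.

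The Guralnick route does work once two points are made explicit.
\begin{itemize}
\item \emph{Passage to $S$.} The index $[N:B\cap N]=[BN:B]$ divides $p^a$. For a simple factor $S_1\lhd N$, the index $[S_1:B\cap S_1]=[(B\cap N)S_1:B\cap N]$ divides $[N:B\cap N]$. Moreover $H\cap S_1$ is cyclic of index at most $2$ in $B\cap S_1$. Index $1$ is impossible, since $S_1$ is not solvable.
\item \emph{Case check.} A group with a cyclic subgroup of index at most $2$ is solvable, so only the solvable entries of the list matter. These are the Borel subgroups of $\operatorname{PSL}(2,q)$ (including $A_4<A_5$), $S_4<\operatorname{PSL}(3,2)$, and $3^2\rtimes\mathrm{GL}(2,3)<\operatorname{PSL}(3,3)$. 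None of them has a cyclic subgroup of index at most $2$.
\end{itemize}
So there are no exceptional cases. Your clause ``solvable anyway'' could not have rescued one, since $S$ is non-abelian simple. Also, the Lucchini bound $|H|<2p^a$ is never used.

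The paper itself gives no proof; it cites Scott's 1964 book, which predates the classification. A short classical argument is available, so your route is genuinely different and much heavier:
\begin{itemize}
\item The hypotheses pass to normal subgroups (via $B\cap M$ and $H\cap M$) and to quotients. Hence a minimal counterexample $G$ is non-abelian simple.
\item For $1\neq h\in H$ we have $H\le C_G(h)$. If some $b\in B\setminus H$ also centralizes $h$, then $B\le C_G(h)$ and $|h^G|$ divides $p^a$. This contradicts Burnside's theorem that a non-abelian simple group has no conjugacy class of nontrivial prime-power size.
\item If $B=H$ or $p=2$, then $|h^G|$ divides $[G:H]$, a power of $p$, for every $h\in H$. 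So $H=1$ and $G$ is a $p$-group.
\item If $p$ is odd and $[B:H]=2$, each $b\in B\setminus H$ acts fixed-point-freely on $H$. Then $|H|$ is odd, since its unique involution would be fixed, and $b^2\in C_H(b)=1$. Hence $|G|_2=|B|_2=2$, so $G$ has a subgroup of index $2$, a contradiction.
\end{itemize}
This needs only Burnside's character-theoretic lemma, whereas your argument depends on the classification through Guralnick's theorem. What your route buys is uniformity: the same reduction disposes of any inherited condition on $B$ that rules out the handful of subgroups on Guralnick's list.
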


\begin{lemma}\label{Huppert's-classification-lemma}\cite[Theorem 14.9(b)]{huppert2025_book}
Let $G$ be a non abelian $2$-group such that $G$ has a cyclic subgroup of index $2$. Then $G$ is isomorphic to one of the following.
$$M_{2^n}=\langle{x,y\mid x^{2n-1}=y^2=1, yxy=x^{2^{n-2}+1}}\rangle, \quad n\ge 4;$$
$$D_{2^n}=\langle{x,y\mid x^{2n-1}=y^2=1, yxy=x^{-1}}\rangle, \quad n\ge 2;$$
$$Q_{2^n}=\langle{x,y\mid x^{2n-1}=y^4=1, yxy^{-1}=x^{2^{n-1}-1}}\rangle, \quad n\ge 3;$$
$$SD_{2^n}=\langle{x,y\mid x^{2n-1}=y^2=1, yxy=x^{2^{n-2}-1}}\rangle, \quad n\ge 4.$$
\end{lemma}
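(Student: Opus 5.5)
This is Huppert's classical classification of $2$-groups with a cyclic maximal subgroup, cited from \cite{huppert2025_book}, so I will not re-prove it from scratch but will outline the standard argument. The relations should be read as $x^{2^{n-1}}=1$, with $|G|=2^n$.

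\textbf{Setup.} Let $|G|=2^n$ and let $A=\langle x\rangle$ be cyclic of index $2$. Then $A\lhd G$ and $o(x)=2^{n-1}$. Pick $y\in G\setminus A$. Conjugation by $y$ induces an automorphism of $A$ with $x^y=x^r$, where $r^2\equiv 1 \pmod{2^{n-1}}$, because $y^2\in A$ centralizes $x$. Since $G$ is non-abelian, $r\not\equiv 1$.

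\textbf{Step 1: the possible values of $r$.} The first step is to solve $r^2\equiv1 \pmod{2^{n-1}}$. For $n-1\ge 3$ the group $(\mathbb{Z}/2^{n-1})^\times\cong \mathbb{Z}_2\times\mathbb{Z}_{2^{n-3}}$ has exactly three involutions, namely $r=-1$, $r=2^{n-2}+1$ and $r=2^{n-2}-1$. For $n-1\le 2$ only $r=-1$ occurs.

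\textbf{Step 2: determining $y^2$.} Write $y^2=x^s$. Since $y$ commutes with $y^2$, we get $x^{sr}=x^s$, which restricts $s$. I then treat each value of $r$ separately.
\begin{itemize}
\item If $r=-1$, then $x^{-s}=x^s$ forces $y^2\in\{1,x^{2^{n-2}}\}$. This gives $D_{2^n}$ (for $n\ge 2$) or $Q_{2^n}$ (for $n\ge 3$).
\item If $r=2^{n-2}+1$, then $s$ is even. Replacing $y$ by $yx^j$ changes $y^2$ to $y^2x^{j(r+1)}$, with $r+1=2^{n-2}+2$. Since $2^{n-2}+2$ is $2$ times an odd number (for $n\ge 4$), $x^{j(r+1)}$ runs over all even powers of $x$. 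We can therefore choose $j$ so that $y^2=1$, which gives $M_{2^n}$.
\item If $r=2^{n-2}-1$, then $sr\equiv s$ forces $s\in\{0,2^{n-2}\}$. The case $y^2=x^{2^{n-2}}$ is ruled out by replacing $y$ by $yx$: we have $(yx)^2=y^2x^{r+1}=x^{2^{n-2}}x^{2^{n-2}}=1$. Either way we may take $y^2=1$, which gives $SD_{2^n}$.
\end{itemize}

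\textbf{Step 3: existence and distinctness.} Each presentation defines a group of order $2^n$, realized as a semidirect product or an extension. The four families are pairwise non-isomorphic, which one sees by counting involutions or by computing centers.

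The main obstacle is Step 2: choosing the correct coset representative $yx^j$ so as to normalize $y^2$ in the cases $r=2^{n-2}\pm1$, and tracking $(yx^j)^2=y^2x^{j(r+1)}$ carefully.
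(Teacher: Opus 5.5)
Your proposal is correct. The paper gives no argument beyond citing Huppert, and your sketch is the standard proof behind that reference: identify the involution $r$ in $(\mathbb{Z}/2^{n-1})^\times$, then normalize $y^2$ by replacing $y$ with $yx^j$, using $(yx^j)^2=y^2x^{j(r+1)}$.

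Your case $r=-1$, $y^2=x^{2^{n-2}}$ also supplies the relation that the printed presentation of $Q_{2^n}$ omits. With only $y^4=1$ and $yxy^{-1}=x^{-1}$, that presentation defines $\mathbb{Z}_{2^{n-1}}\rtimes\mathbb{Z}_4$, which has order $2^{n+1}$. Likewise, your Step 1 shows that no non-abelian group arises for $n=2$, so the range for $D_{2^n}$ should be $n\ge 3$.
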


The following result is straightforward and hence stated without a proof.

\begin{lemma}\label{non-cyclic-abelian-$2$-group-lemma}
Let $G$ be a non-cyclic abelian $2$-group such that $G$ has a cyclic subgroup of index $2$. Then $G\cong\mathbb{Z}_{2^{n-1}}\times\mathbb{Z}_2$.
\end{lemma}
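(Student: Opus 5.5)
Write $G=\langle x\rangle\times\langle y\rangle$ with $G$ a finite abelian $2$-group, not cyclic, and containing a cyclic subgroup $H$ with $[G:H]=2$. Put $|G|=2^n$, so $|H|=2^{n-1}$. The plan is to apply the structure theorem for finite abelian groups and then use the index-$2$ condition to force the invariant factors.

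By the fundamental theorem, $G\cong \mathbb{Z}_{2^{a_1}}\times\cdots\times\mathbb{Z}_{2^{a_r}}$ with $a_1\ge a_2\ge\cdots\ge a_r\ge 1$. Since $G$ is not cyclic, $r\ge 2$. The exponent of $G$ is $2^{a_1}$, and $H$ is cyclic of order $2^{n-1}$, so $\exp(G)\ge 2^{n-1}$, that is, $a_1\ge n-1$. On the other hand $a_1+a_2+\cdots+a_r=n$, and each $a_i\ge 1$ with $r\ge 2$. It follows that
$$n=a_1+\cdots+a_r\ge (n-1)+(r-1),$$
which gives $r\le 2$. Hence $r=2$ and $a_2=n-a_1\le 1$, so $a_2=1$ and $a_1=n-1$. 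Therefore $G\cong\mathbb{Z}_{2^{n-1}}\times\mathbb{Z}_2$.

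An equivalent route avoids the invariant-factor bookkeeping. Any cyclic subgroup of order $\exp(G)$ in an abelian $p$-group is a direct summand, so $G=H\times C$ with $|C|=2$, which again gives $G\cong\mathbb{Z}_{2^{n-1}}\times\mathbb{Z}_2$. The only real content in either argument is the counting inequality $a_1\ge n-1$, which comes from comparing the exponent with $|H|$; there is no genuine obstacle.
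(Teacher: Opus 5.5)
The paper states this lemma without proof as straightforward, and your invariant-factor argument is a correct and complete way to supply one: the bound $a_1\ge n-1$ from $\exp(G)\ge |H|$, together with $\sum a_i=n$, $a_i\ge 1$ and $r\ge 2$, forces $r=2$, $a_1=n-1$ and $a_2=1$. Two small fixes: delete the opening phrase ``Write $G=\langle x\rangle\times\langle y\rangle$'', which assumes the conclusion before you prove it (you never use it). In your alternative route, state explicitly that $\exp(G)=|H|$, which holds because a non-cyclic $G$ has $\exp(G)\le 2^{n-1}=|H|\le\exp(G)$.
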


\begin{table}[h]
    \centering
    \begin{tabular}{|c|c|c|}
        \hline
        $G$ & $|G|$ & \textbf{$\psi''_k(G)$} \rule[-2.5ex]{0pt}{6ex} \\ 
        \hline
        $\mathbb{Z}_{2^{n-1}} \times \mathbb{Z}_2$ & 
        $2^n(n \geq 2)$ & 
        $\displaystyle \frac{2^{n(k+1)} + 2^{2k+1} - 2^k - 1}{2^{n(k+1)}(2^{k+1} - 1)}$ \rule[-4.5ex]{0pt}{10ex} \\
        \hline
        $M_{2^n}$ & 
        $2^n(n \geq 4)$ & 
        $\displaystyle \frac{2^{n(k+1)} + 2^{2k+1} - 2^k - 1}{2^{n(k+1)}(2^{k+1} - 1)}$ \rule[-4.5ex]{0pt}{10ex} \\
        \hline
        $D_{2^n}$ & 
        $2^{n+1}(n \geq 2)$ & 
        $\displaystyle \frac{2^{n(k+1)+k} + 2^{n+2k+1} - 2^{n+k} + 2^k - 1}{2^{(n+1)(k+1)}(2^{k+1} - 1)}$ \rule[-4.5ex]{0pt}{10ex} \\
        \hline
        $Q_{2^n}$ & 
        $2^n(n \geq 3)$ & 
        $\displaystyle \frac{2^{n(k+1)-1} + 2^{n+3k} - 2^{n+2k-1} + 2^k - 1}{2^{n(k+1)}(2^{k+1} - 1)}$ \rule[-4.5ex]{0pt}{10ex} \\
        \hline
        $SD_{2^n}$ & 
        $2^n(n \geq 4)$ & 
        $\displaystyle \frac{2^{n(k+1)-1} + 2^{n+3k-1} + 2^{n+2k-2} - 2^{n+k-2} + 2^k - 1}{2^{n(k+1)}(2^{k+1} - 1)}$ \rule[-4.5ex]{0pt}{10ex} \\
        \hline
    \end{tabular}
    \caption{$\psi''_k$ values of some groups}
    \label{Huppert's Table}
\end{table}

The $\psi''_k$ values of the groups mentioned in Lemma \ref{Huppert's-classification-lemma} and Lemma \ref{non-cyclic-abelian-$2$-group-lemma} are given in Table \ref{Huppert's Table}.

\medskip

\noindent\textbf{Proof of Theorem \ref{main-theorem}:}
\begin{enumerate}
\item[$(a).$]
First observe that, 
$$\frac{1+2^k+6\cdot 4^k}{8^{k+1}}>\frac{1+3\cdot 2^k+2\cdot 3^k}{6^{k+1}} \quad \text{for} \quad k=1,2$$ and
$$\frac{1+2^k+6\cdot 4^k}{8^{k+1}}>\frac{1}{2^{k+1}} \quad \text{for} \quad k \geq 3.$$
Hence by Theorem \ref{nilpotent-theorem}, $\psi''_k(G)>\frac{1+2^k+6\cdot 4^k}{8^{k+1}}$ implies $G$ is nilpotent, for all $k\ge 1$. Let, if possible, $G$ be a counterexample of minimum order, i.e., $G$ is a non-abelian group of minimum order such that 
$$\psi''_k(G)>\frac{1+2^k+6\cdot 4^k}{8^{k+1}}.$$
Since 
$$\frac{1}{|G|}\sum_{g \in G}o(g)^k>\left(\frac{1+2^k+6\cdot 4^k}{8^{k+1}}\right)|G|^k,$$ 
there exists $x\in G$ such that $o(x)>\left(\frac{1+2^k+6\cdot 4^k}{8^{k+1}}\right)^{1/k}|G|$. Let $H=\langle{x}\rangle$. Then 
$$[G:H]< \left(\frac{8^{k+1}}{1+2^k+6\cdot 4^k}\right)^{1/k}\le\frac{64}{27}<3,$$
where the second inequality follows from Lemma \ref{threshold-inequality-lemma} (c). Hence $[G:H]=2$ and therefore all Sylow $p$-subgroups of $G$ (if exists) are cyclic and normal in $G$, $p \neq 2$ and since $G$ is nilpotent then $G \cong \mathbb{Z}_t \times Q$, where $Q$ is the Sylow $2$-subgroup of $G$ and $t \geq 1$. So $Q$ has to be non-abelian. Then by Lemma \ref{product-quotient} (1) we have 
$$\psi''_k(Q) = \psi''_k(G/{\mathbb{Z}_t})\geq \psi''_k(G)>\frac{1+2^k+6\cdot 4^k}{8^{k+1}}, \quad \text{for all $k\ge 1$}$$
i.e.,
\begin{equation} \label{abelian-quotient-eqn}
\psi''_k(Q)>\frac{1+2^k+6\cdot 4^k}{8^{k+1}}, \quad \text{for all $k\ge 1$}.
\end{equation}
Since $Q$ is non-abelian and $[G:H]=2$, then $Q$ contains a cyclic subgroup of index $2$. Hence $Q$ is isomorphic to one of the groups mentioned in Lemma \ref{Huppert's-classification-lemma}. From Table \ref{Huppert's Table} and Lemma A in Appendix, it can be seen that 
$$\psi''_k(Q)\le \frac{1+2^k+6\cdot 4^k}{8^{k+1}}, \quad \text{for all $k\ge 1$},$$
which contradicts \eqref{abelian-quotient-eqn}.\qed

\item[$(b).$]
Observe that, $$\frac{1+3\cdot 2^k}{4^{k+1}}>\frac{1+2^k+6\cdot 4^k}{8^{k+1}}, \quad \text{for all $k\ge 1$}.$$
So by (a), $\psi''_k(G)>\frac{1+3\cdot 2^k}{4^{k+1}}$ implies $G$ is abelian, for all $k\ge 1$. Let, if possible, $G$ be a counterexample of minimum order, i.e., $G$ is a non-cyclic group of minimum order such that 
$$\psi''_k(G)>\frac{1+3\cdot 2^k}{4^{k+1}}.$$
Since 
$$\frac{1}{|G|}\sum_{g \in G}o(g)^k>\left(\frac{1+3\cdot 2^k}{4^{k+1}}\right)|G|^k,$$ 
there exists $x\in G$ such that $o(x)>\left(\frac{1+3\cdot 2^k}{4^{k+1}}\right)^{1/k}|G|$. Let $H=\langle{x}\rangle$. Then 
$$[G:H]< \left(\frac{4^{k+1}}{1+3\cdot 2^k}\right)^{1/k}\le\frac{16}{7}<3,$$
where the second inequality follows from Lemma \ref{threshold-inequality-lemma} (a). Hence $[G:H]=2$ and therefore for $p\neq 2$, all Sylow $p$-subgroups of $G$ (if exists) are cyclic and normal in $G$. Since $G$ is abelian, $G \cong \mathbb{Z}_t \times Q$, where $Q$ is the abelian Sylow $2$-subgroup of $G$ and $t \geq 1$. Then similarly by the previous part $Q$ has to be non-cyclic together with a cyclic subgroup of index $2$ and
\begin{equation} \label{cyclic_quotient_eqn}
\psi''_k(Q)>\frac{1+3\cdot 2^k}{4^{k+1}}, \quad \text{for all $k\ge 1$}.
\end{equation}

Since $Q$ is abelian and non-cyclic, by Lemma \ref{non-cyclic-abelian-$2$-group-lemma}, we have $Q \cong \mathbb{Z}_{2^{n-1}} \times \mathbb{Z}_2$, where $n \geq 2$. From Table \ref{Huppert's Table} and Lemma B in Appendix, it can be seen that
$$\psi''_k(Q)\le \frac{1+3\cdot 2^k}{4^{k+1}}, \quad \text{for all $k\ge 1$},$$
which contradicts \eqref{cyclic_quotient_eqn}.\qed

\item[$(c).$] Let, if possible, $G$ be a counterexample of minimum order, i.e., $G$ is a non-supersolvable group of minimum order such that 
\begin{equation}\label{A4-threshold-eqn}
\psi^{\prime\prime}_k(G)>\frac{1+8\cdot3^k+3\cdot 2^k}{12^{k+1}}.
\end{equation}
Since 
$$\frac{1}{|G|}\sum_{g \in G}o(g)^k>\left(\frac{1+8\cdot 3^k+3\cdot 2^k}{12^{k+1}}\right)|G|^k,$$ 
there exists $x\in G$ such that $o(x)>\left(\frac{1+8\cdot 3^k+3\cdot 2^k}{12^{k+1}}\right)^{1/k}|G|$. Let $H=\langle{x}\rangle$. Then 
$$[G:H]<\left(\frac{12^{k+1}}{1+8\cdot 3^k+3\cdot 2^k}\right)^{1/k}\le\frac{144}{31}<5,$$
where the second inequality follows from Lemma \ref{threshold-inequality-lemma} (d).

Hence $[G:H]\le 4$. Let $N=\operatorname{Core}_G(H)$. Then by Theorem \ref{lucchini-theorem},
\begin{equation}\label{index-eqn-supersolvable}
[G:N]=[G:H][H:N]\le 4\cdot 3=12.
\end{equation}
If $N\neq\lbrace{1}\rbrace$, then by Lemma \ref{product-quotient} ($1$), 
$$\psi^{\prime\prime}_k(G/N)>\frac{1+8\cdot 3^k+3\cdot 2^k}{12^{k+1}},$$
which by the minimality of $G$ yields that $G/N$ is supersolvable and so is $G$, contrary to our assumption.

If $N=\lbrace{1}\rbrace$, then by (\ref{index-eqn-supersolvable}), $|G|\le 12$. Since $G$ is non supersolvable, $G\cong A_4$, contradicting (\ref{A4-threshold-eqn}). All these cases together imply that no such counterexample exists for $G$.

\item[$(d).$] We again start with $G$, a counterexample of minimum order. Then $G$ is a non solvable group of minimum order such that
\begin{equation}\label{A5-threshold-eqn}
\psi^{\prime\prime}_k(G)>\frac{1+15\cdot 2^k+20\cdot 3^k+24\cdot 5^k}{60^{k+1}}.
\end{equation}

Proceeding as above, we have an element $x\in G$ such that 
$$[G:H]<\frac{60^{k+1}}{1+15\cdot 2^k+20\cdot 3^k+24\cdot 5^k}\le\frac{3600}{211}<18,$$
where $H=\langle{x}\rangle$. The second inequality follows from Lemma \ref{threshold-inequality-lemma} (e).

Hence $[G:H]\le 17$. If $[G:H]\in\lbrace{16,17}\rbrace$, then by Theorem \ref{prime-power-index-solvable-theorem}, $G$ is solvable, contrary to our assumption. Hence we only work with $[G:H]\le 15$. Let $N=\operatorname{Core}_G(H)$. Then by Theorem \ref{lucchini-theorem},
\begin{equation}\label{index-eqn-solvable}
[G:N]=[G:H][H:N]\le 15\cdot 14=210.
\end{equation}
If $N\neq\lbrace{1}\rbrace$, then by Lemma \ref{product-quotient} ($1$), 
$$\psi^{\prime\prime}_k(G/N)>\frac{1+15\cdot 2^k+20\cdot 3^k+24\cdot 5^k}{60^{k+1}},$$
which by the minimality of $G$ yields that $G/N$ is solvable and so is $G$, contrary to our assumption.

If $N=\lbrace{1}\rbrace$, then by (\ref{index-eqn-solvable}), $|G|\le 210$. Since $G$ is non solvable, one of the following holds:
\begin{enumerate}[label=(\roman*)]
\item $|G|=60$ and $G\cong A_5$.
\item $|G|=120$ and $G\cong A_5\times\mathbb{Z}_2$ or $G\cong S_5$ or $G\cong\operatorname{SL}(2,5)$.
\item $|G|=168$ and $G\cong\operatorname{PSL}(2,7)$.
\item $|G|=180$ and $G\cong A_5\times\mathbb{Z}_3$.
\end{enumerate}
Clearly, $G\cong A_5$ contradicts (\ref{A5-threshold-eqn}). If $G\cong A_5\times\mathbb{Z}_n$ with $n=2,3$, then Lemma \ref{product-quotient} ($2)$, leads to
$$\psi^{\prime\prime}_k(G)\le\psi^{\prime\prime}_k(A_5)\psi^{\prime\prime}_k(\mathbb{Z}_n)<\psi^{\prime\prime}_k(A_5)=\frac{1+15\cdot 2^k+20\cdot 3^k+24\cdot 5^k}{60^{k+1}},$$
contradicting (\ref{A5-threshold-eqn}).

For $G\cong S_5,\operatorname{SL}(2,5)$ and $\operatorname{PSL}(2,7)$, we have (see Lemma C, Appendix)
$$\psi^{\prime\prime}_k(G)<\frac{1+15\cdot 2^k+20\cdot 3^k+24\cdot 5^k}{60^{k+1}}=\psi^{\prime\prime}_k(A_5),$$
contradicting (\ref{A5-threshold-eqn}). Hence no such counterexample exists for $G$.
\end{enumerate}

\section{Spectrum of $\psi^{\prime\prime}_k$}\label{section-density}

In this section we consider the spectrum of $\psi^{\prime\prime}_k$. For a finite group $G$, we denote the order set of $G$ by $\omega(G)$. For a positive integer $n$, define $[n]\coloneqq\lbrace{1,2,\ldots,n}\rbrace$.

\begin{lemma}\label{Psi-k-G-odd-lemma}
Let $G$ be a finite group and $k$ be a positive integer. Then $\psi_k(G)$ is odd.
\end{lemma}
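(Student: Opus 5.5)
The plan is to work modulo $2$ and pair each element with its inverse. The map $g\mapsto g^{-1}$ is an involution on $G$ that preserves element orders, since $o(g)=o(g^{-1})$. Its fixed points are exactly the elements with $g^2=1$, namely the identity together with the involutions. The remaining elements split into pairs $\{g,g^{-1}\}$ with $g\neq g^{-1}$. Each such pair contributes $2\,o(g)^k$ to $\psi_k(G)$, which is even. So we obtain
$$\psi_k(G)\equiv \sum_{g^2=1} o(g)^k \pmod 2.$$

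Next we evaluate the remaining sum. The identity contributes $1^k=1$, and each involution contributes $2^k$, which is even because $k\ge 1$. Hence
$$\psi_k(G)\equiv 1 \pmod 2,$$
so $\psi_k(G)$ is odd.

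A second route gives the same result. The number of elements of order $d$ is $\varphi(d)\cdot(\text{number of cyclic subgroups of order } d)$, and $\varphi(d)$ is even for $d\ge 3$. The terms with $d=2$ are even because of the factor $2^k$, so only the identity term survives modulo $2$.

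No step here is a real obstacle. The only point requiring care is the use of $k\ge 1$ to make the involution terms even. For $k=0$ the sum would be $|G|$, which need not be odd.
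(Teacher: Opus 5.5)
Your proof is correct. Your second route is exactly the paper's argument (writing $\psi_k(G)=\sum_{d\in\omega(G)}c_d d^k$ and using $\varphi(d)\mid c_d$). Your main inverse-pairing argument captures the same parity phenomenon in a more self-contained way: it needs no count of elements by order, and it explicitly shows the $d=2$ terms are even via $k\ge 1$, a step the paper's proof leaves implicit.
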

\pf We have $$\psi_k(G)=\sum_{d\in\omega(G)}c_dd^k=1+\sum_{d\in\omega(G),d\ge 2}c_dd^k.$$ 
Since $\varphi(d)\mid c_d$, $c_d$ is even for $d>2$ and hence $c_dd^k$ is even for $d>2$. Thus we have the result.\qed

\begin{proposition}
For any positive integer $k$, the function $\psi^{\prime\prime}_k$ is neither surjective nor injective
\end{proposition}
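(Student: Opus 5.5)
Non-surjectivity and non-injectivity will be handled separately; both follow quickly from Lemma \ref{Psi-k-G-odd-lemma} and explicit small examples.

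\emph{Non-surjectivity.} Since $\psi_k(G)$ is always a positive integer, $\psi''_k(G)=\psi_k(G)/|G|^{k+1}$ is rational, so $\operatorname{Spec}\psi''_k\subseteq\mathbb{Q}\cap(0,1]$ and no irrational number in $(0,1]$ is attained. To exhibit a rational value that is missed, take $\frac{1}{2}$. If $\psi''_k(G)=\frac12$, then $2\psi_k(G)=|G|^{k+1}$. Hence $|G|$ is even, and $\psi_k(G)=|G|^{k+1}/2$ is even, since $2^{k+1}\mid |G|^{k+1}$ and $k+1\ge 2$. This contradicts Lemma \ref{Psi-k-G-odd-lemma}, which says $\psi_k(G)$ is odd. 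More generally, the same argument shows that any $\frac{a}{b}$ with $b$ odd and $a$ even is never attained.

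\emph{Non-injectivity.} I will look for two non-isomorphic groups of the same order with the same multiset of element orders. Then the sums $\sum o(g)^k$ agree for every $k$, so the values of $\psi''_k$ agree.

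\begin{itemize}
\item \textbf{Order $p^3$, $p$ odd.} Take $\mathbb{Z}_p^3$ and the Heisenberg group of exponent $p$. Both have one identity and $p^3-1$ elements of order $p$, so both give $(1+(p^3-1)p^k)/p^{3(k+1)}$ for every $k$.
\item \textbf{Order $16$.} Alternatively, take $\mathbb{Z}_4\times\mathbb{Z}_4$ and $\mathbb{Z}_4\rtimes\mathbb{Z}_4$, which have the same order statistics.
\item \textbf{Even simpler.} Take $G\times\mathbb{Z}_1$ type examples from Table \ref{Huppert's Table}: $\mathbb{Z}_{2^{n-1}}\times\mathbb{Z}_2$ and $M_{2^n}$ have identical $\psi''_k$ for $n\ge4$, as the table records, and these are non-isomorphic since one is abelian and the other is not.
\end{itemize}

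This last example is the cleanest to cite, and I would use it.

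The only step that needs care is confirming that the order statistics genuinely coincide. Citing Table \ref{Huppert's Table} avoids this, since the two displayed formulas are visibly identical.
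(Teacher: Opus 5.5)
Your proof is correct and rests on the same two ideas as the paper's. For non-surjectivity, both use the oddness of $\psi_k(G)$ (Lemma \ref{Psi-k-G-odd-lemma}): the paper applies it to every $\frac{a}{b}$ with $a$ even and $b$ odd, exactly as in your ``more generally'' remark. For non-injectivity, both exhibit non-isomorphic groups with identical order statistics: the paper uses precisely your order-$p^3$ example with $p=3$, and your $\frac12$ and $M_{2^n}$ versus $\mathbb{Z}_{2^{n-1}}\times\mathbb{Z}_2$ are equally valid witnesses. Your opening observation that $\psi''_k$ takes only rational values already settles non-surjectivity onto $(0,1]$ by itself, so the parity lemma is needed only to show that rationals are missed as well.
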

\pf We choose $a,b\in\mathbb{N}$ such that $a$ is even, $b$ is odd and $a<b$. We show that there does not exist $G\in\mathscr{G}$ such that $\psi^{\prime\prime}_k(G)=\frac{a}{b}$. If possible, let $G$ be such a group. Then we have $b\psi_k(G)=a|G|^{k+1}$. Since $b$ is odd, the left-hand side is odd by Lemma \ref{Psi-k-G-odd-lemma} and we get a contradiction.

To show $\psi''_k$ is not injective, consider the Heisenberg group $G=(\mathbb{Z}_3\times\mathbb{Z}_3)\rtimes\mathbb{Z}_3$ and the elementary abelian group $H=\mathbb{Z}_3\times\mathbb{Z}_3\times\mathbb{Z}_3$, both of order $27$. Then for any positive integer $k$, one can easily see that
$$\psi^{\prime\prime}_k(G)=\psi^{\prime\prime}_k(H)=\frac{1+26\cdot 3^k}{27^{k+1}}.$$\qed



\begin{remark}
If two non-isomorphic groups have identical order sequences, it can be easily seen that their $\psi^{\prime\prime}_k$ values will be the same.
\end{remark}

To prove the density of $\operatorname{Spec}\psi''_k$, we recall and prove a few lemmas. 


\begin{lemma}\label{seq-density-lemma}\cite[Proposition on pp. 863]{nitecki2015cantorvals}
Let $(x_n)_{n\ge 1}$ be a sequence of positive real numbers such that $\lim_{n\to\infty}x_n=0$ and $\sum_{n=1}^{\infty}x_n$ is divergent. Then the set containing the sums of all finite subsequences of $(x_n)_{n\ge 1}$ is dense in $[0,\infty)$.
\end{lemma}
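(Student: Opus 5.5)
This is a classical real-analysis fact (the "Kakeya"-type statement that subsums of a divergent null series are dense), so I would prove it directly by a greedy construction. Fix a target $t\ge 0$ and $\varepsilon>0$; it suffices to produce a finite set $F\subseteq\mathbb{N}$ with $\left|t-\sum_{n\in F}x_n\right|<\varepsilon$. The case $t=0$ is handled by the single term $x_n$ with $n$ large, since $x_n\to 0$, so I assume $t>0$.

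\textbf{Step 1.} Since $\lim_{n\to\infty}x_n=0$, choose $N$ such that $x_n<\varepsilon$ for all $n\ge N$.

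\textbf{Step 2.} Since $\sum_{n\ge N}x_n$ still diverges (removing finitely many terms does not affect divergence) and all terms are positive, the partial sums $S_m=\sum_{n=N}^{m}x_n$ increase to $\infty$. Let $m$ be the least index $\ge N$ with $S_m\ge t$. Such an $m$ exists, and $t\le S_m$.

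\textbf{Step 3.} If $m=N$, then $S_m=x_N<\varepsilon$, so $0<t\le S_m<\varepsilon$ and $|t-S_m|<\varepsilon$. Otherwise $S_{m-1}<t$ by minimality, so
$$0\le S_m-t< S_m-S_{m-1}=x_m<\varepsilon .$$
In both cases $F=\{N,N+1,\ldots,m\}$ is a finite subsequence whose sum lies within $\varepsilon$ of $t$. Since $t$ and $\varepsilon$ are arbitrary, the set of finite subsums is dense in $[0,\infty)$.

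There is no real obstacle here. The only point needing care is to truncate the sequence \emph{before} running the greedy accumulation, so that the overshoot at the stopping time is bounded by a single small term. Positivity is what makes the partial sums monotone, and that monotonicity is what gives a well-defined first crossing index $m$.
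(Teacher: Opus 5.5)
Your proof is correct and complete. The paper gives no argument for this lemma and only quotes it from Nitecki, so there is no proof to match step by step; your first-crossing argument fills that gap with a self-contained proof.

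It also proves exactly the form the paper needs. The proof of Theorem~\ref{density-theorem} uses that the finite subsums $\sum_{i\in I}x_i$ with $|I|<\infty$ are dense. Finiteness of $I$ is essential there, because each such sum must come from a finite direct product of cyclic groups.

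The more common textbook route is a greedy selection over the whole sequence: include $x_n$ whenever the running total stays $\le t$. Using $x_n\to 0$ and divergence, this shows the stronger fact that every $t\ge 0$ is exactly the sum of some, possibly infinite, subseries. Density of finite subsums then follows by truncating that subseries.

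Your approach truncates to a tail of terms below $\varepsilon$ and stops at the first consecutive partial sum that crosses $t$. It is shorter and yields only approximation rather than exact representation, but approximation is all that is required here.

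One small simplification: set $S_{N-1}=0$. Then $S_{m-1}<t\le S_m$ holds for every $t>0$, so your separate case $m=N$ is unnecessary.
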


\begin{lemma}\label{two-seq-same-nature-lemma}\cite[Theorem 10.9]{apostol1967one}
Let $(a_n)_{n\ge 1}$ and $(b_n)_{n\ge 1}$ be two sequences of positive real numbers such that $\lim_{n\to\infty}\frac{a_n}{b_n}=\alpha\in [0,\infty]$. If $\alpha\in (0,\infty)$, then the series $\sum_{n=1}^{\infty}a_n$ and $\sum_{n=1}^{\infty}b_n$ have the same nature.
\end{lemma}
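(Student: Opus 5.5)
The plan is to reduce the statement to the direct comparison test for series of positive terms. Write $\alpha=\lim_{n\to\infty}a_n/b_n\in(0,\infty)$. By the definition of the limit with $\varepsilon=\alpha/2>0$, there is an index $N$ such that
$$\frac{\alpha}{2}<\frac{a_n}{b_n}<\frac{3\alpha}{2}\qquad\text{for all } n\ge N.$$
Since $b_n>0$, this is equivalent to the two-sided bound
$$\frac{\alpha}{2}\,b_n<a_n<\frac{3\alpha}{2}\,b_n\qquad (n\ge N).$$
This is the only place where the hypothesis $\alpha\in(0,\infty)$ is used. The lower bound needs $\alpha>0$, and the upper bound needs $\alpha<\infty$. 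This also explains why the endpoint cases $\alpha=0$ and $\alpha=\infty$ are excluded from the conclusion.

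Next, I will use that for series with positive terms the sequence of partial sums is increasing. Such a series therefore converges if and only if its partial sums are bounded above. Discarding the finitely many terms with $n<N$ does not change the nature of either series, so it suffices to compare the tails $\sum_{n\ge N}a_n$ and $\sum_{n\ge N}b_n$. Summing the displayed inequalities from $N$ to $M$ gives
$$\frac{\alpha}{2}\sum_{n=N}^{M}b_n\le\sum_{n=N}^{M}a_n\le\frac{3\alpha}{2}\sum_{n=N}^{M}b_n .$$
If $\sum b_n$ converges, the right-hand inequality bounds the partial sums of $\sum a_n$, so $\sum a_n$ converges. Conversely, if $\sum a_n$ converges, the left-hand inequality bounds the partial sums of $\sum b_n$ by $\frac{2}{\alpha}\sum a_n$, so $\sum b_n$ converges. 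Taking contrapositives, divergence of either series forces divergence of the other, so the two series have the same nature.

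There is no real obstacle here; the argument is a routine $\varepsilon$-choice followed by monotone bounded partial sums. The one point to handle carefully is choosing $\varepsilon$ strictly smaller than $\alpha$, so that the lower constant $\alpha-\varepsilon$ stays positive. This is what makes the comparison work in both directions.
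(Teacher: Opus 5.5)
Your proof is correct: it is the standard argument for the limit comparison test, namely trapping $a_n/b_n$ between $\alpha/2$ and $3\alpha/2$ for large $n$ and then applying the comparison test in both directions. The paper gives no proof of its own and simply cites Apostol, whose proof is essentially the same argument.
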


Let $\lbrace{u_n}\rbrace$ be a sequence of real numbers. The number $p_n\coloneqq\prod_{k=1}^nu_k$ is called the $n$-th partial product of the infinite product $\prod_{n=1}^{\infty}u_n$ and $u_n$ is called the $n$-th factor of the product. Given an infinite product $\prod_{n=1}^{\infty}u_n$, if no factor $u_n$ is zero, we say the product converges if there exists a number $p\neq 0$ such that $\lbrace{p_n}\rbrace$ converges to $p$. In this case, $p$ is called the value of the product and we write $p=\prod_{n=1}^{\infty}u_n$. If $\lbrace{p_n}\rbrace$ converges to zero, we say the product diverges to zero.

\begin{lemma}\cite[Theorem 8.55]{Apostol1974}\label{product-convergence-lemma}
Let $(a_n)_{n\ge 1}$ be a sequence of non-negative real numbers. Then the product $\prod_{n=1}^{\infty}(1-a_n)$ converges if and only if $\sum_{n=1}^{\infty}a_n$ converges.
\end{lemma}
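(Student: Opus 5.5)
The statement immediately above is Lemma \ref{product-convergence-lemma}, the classical criterion for infinite products $\prod_{n\ge1}(1-a_n)$ with $a_n\ge 0$. The plan is to pass to logarithms of the partial products $p_n=\prod_{k=1}^n(1-a_k)$ and compare the resulting series with $\sum a_n$. Throughout we assume no factor vanishes, which is the convention adopted before the statement. For convergence to a nonzero limit we also need $a_n<1$ eventually: if the product converges then $u_n=p_n/p_{n-1}\to 1$. If instead $\sum a_n$ converges, then $a_n\to 0$.

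First we dispose of degenerate factors. In either direction $a_n\to 0$, so after discarding finitely many initial factors we may assume $0\le a_n\le \tfrac12$ for all $n$. Discarding finitely many nonzero factors affects neither the convergence of the product to a nonzero value nor the convergence of the series. Now every factor lies in $[\tfrac12,1]$, so $p_n>0$ is nonincreasing and hence converges to some $p\ge 0$. The product converges exactly when $p>0$, that is, when $\log p_n=\sum_{k=1}^n\log(1-a_k)$ stays bounded below. Equivalently, the series $\sum_k -\log(1-a_k)$ of nonnegative terms converges.

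The key step is the two-sided elementary estimate
$$x\le -\log(1-x)\le 2x\qquad (0\le x\le \tfrac12).$$
The lower bound follows from $\log(1-x)\le -x$, i.e.\ concavity of the logarithm. The upper bound follows from the series $-\log(1-x)=\sum_{m\ge1}x^m/m\le x\sum_{m\ge 0}x^m= x/(1-x)\le 2x$. By comparison of nonnegative series (or by Lemma \ref{two-seq-same-nature-lemma} applied to the terms with $a_n>0$, using $-\log(1-a_n)/a_n\to1$), $\sum -\log(1-a_n)$ converges if and only if $\sum a_n$ converges. This gives both implications.

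The only genuinely delicate point is the reduction step: justifying that $a_n\to 0$ in the direction ``product converges $\Rightarrow$ series converges''. This is where the hypothesis $p\ne 0$ is used, via $u_n=p_n/p_{n-1}\to p/p=1$. Once that is in place, the rest is the routine logarithm comparison above.
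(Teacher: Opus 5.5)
Your proof is correct and complete. The paper does not prove this lemma; it quotes it from Apostol and uses it only for the conclusion that a divergent $\sum a_n$ sends the product to $0$, so there is no internal argument to compare with. Your route is the standard self-contained proof:
\begin{itemize}
\item use $a_n\to 0$ in both directions to reduce to $0\le a_n\le \tfrac12$;
\item observe that the partial products are then positive and nonincreasing;
\item compare $\sum -\log(1-a_n)$ with $\sum a_n$ via $x\le -\log(1-x)\le 2x$.
\end{itemize}
You also correctly locate the one place where $p\neq 0$ is needed, namely to get $u_n=p_n/p_{n-1}\to 1$.

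The only loose end is the case $a_n=1$ for some $n$. The statement permits it, but your standing assumption ``no factor vanishes'' excludes it. It costs nothing under the standard full definition (as in Apostol). If $a_n=1$ infinitely often, the product diverges by definition and $\sum a_n$ visibly diverges. If $a_n=1$ only finitely often, convergence of the product is judged on a tail, which is exactly what your discarding step produces; just drop the word ``nonzero'' there.

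Your monotonicity observation is also what justifies the paper's later use of the lemma. When $0\le a_n<1$, the partial products decrease to some $p\ge 0$. So divergence of $\sum a_n$ forces $p_n\to 0$, rather than mere non-convergence.
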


Lemma \ref{product-convergence-lemma} says that if $\sum_{n=1}^{\infty}a_n$ converges, then $\prod_{n=1}^{\infty}(1-a_n)$ converges to a non-zero number. But if $\sum_{n=1}^{\infty}a_n$ diverges, then $\prod_{n=1}^{\infty}(1-a_n)$ diverges to zero.

\begin{lemma}\label{prime-product-limit-zero-lemma}
Let $p_i$ denote the $i$-th prime and let $k$ be a positive integer. Then
$$\prod_{i=1}^{\infty}\left(\frac{p_i^k(p_i-1)}{p_i^{k+1}-1}\right)=0$$
\end{lemma}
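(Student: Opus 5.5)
We must show $\prod_i \frac{p_i^k(p_i-1)}{p_i^{k+1}-1}=0$. The plan is to write each factor in the form $1-a_i$ and then apply Lemma \ref{product-convergence-lemma}. A direct computation gives
$$1-\frac{p^k(p-1)}{p^{k+1}-1}=\frac{p^{k+1}-1-p^{k+1}+p^k}{p^{k+1}-1}=\frac{p^k-1}{p^{k+1}-1}.$$
So we set $a_i=\frac{p_i^k-1}{p_i^{k+1}-1}$. Each $a_i$ is non-negative, and in fact $a_i\in(0,1)$ for every prime. This means no factor of the product vanishes, and Lemma \ref{product-convergence-lemma} applies to the sequence $(a_i)$. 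By the remark following that lemma, the infinite product diverges to zero exactly when $\sum_i a_i$ diverges.

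It therefore suffices to show that $\sum_i a_i=\infty$. We compare with $b_i=1/p_i$ using Lemma \ref{two-seq-same-nature-lemma}:
$$\frac{a_i}{b_i}=\frac{p_i^{k+1}-p_i}{p_i^{k+1}-1}\longrightarrow 1\in(0,\infty).$$
Hence $\sum a_i$ and $\sum 1/p_i$ have the same nature. Alternatively, one can avoid the limit comparison: $p^{k+1}-p\ge \tfrac12(p^{k+1}-1)$ for $p\ge 2$, so $a_i\ge \frac{1}{2p_i}$.

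The only external input is the classical theorem of Euler that $\sum_i 1/p_i$ diverges, which I would cite as standard. I do not expect any real obstacle: the identity for $1-a_i$ is routine, and the divergence step is Euler's theorem. Combining the steps, $\sum a_i=\infty$, so the partial products tend to $0$, which is the claim.
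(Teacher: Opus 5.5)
Your proof is correct and follows the paper's argument. Your $a_i=\frac{p_i^k-1}{p_i^{k+1}-1}$ is exactly the paper's $b_i=\frac{1+p_i+\cdots+p_i^{k-1}}{1+p_i+\cdots+p_i^k}$, and the paper likewise bounds it below by a constant multiple of $1/p_i$ (namely $\frac{2}{3p_i}$, in place of your $\frac{1}{2p_i}$), uses the divergence of $\sum 1/p_i$, and concludes via Lemma \ref{product-convergence-lemma}.
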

\pf Consider the sequence $(u_n)_{n\ge 1}$, where
$$u_n\coloneqq\prod_{i=1}^{n}\left(\frac{p_i^k(p_i-1)}{p_i^{k+1}-1}\right)=\prod_{i=1}^{n}\left(\frac{p_i^k}{1+p_i+p_i^2+\cdots+p_i^k}\right).$$
Let $b_i\coloneqq\frac{1+p_i+\cdots+p_i^{k-1}}{1+p_i+\cdots+p_i^k}$. Since $p_i\ge 2$ and $k\ge 1$, we have
$$b_i=\frac{1}{p_i}\left(\frac{p_i+p_i^2+\cdots+p_i^k}{1+p_i+\cdots+p_i^k}\right)=\frac{1}{p_i}\left(1-\frac{1}{1+p_i+\cdots+p_i^k}\right)\ge \frac{1}{p_i}\left(1-\frac{1}{3}\right)=\frac{2}{3p_i}.$$
Since $\sum_{i=1}^{\infty}\frac{1}{p_i}$ is divergent, $\sum_{i=1}^{\infty}b_i$ is divergent. Hence by Lemma \ref{product-convergence-lemma}, $\prod_{i=1}^{\infty}(1-b_i)$ diverges to $0$, i.e.,
$$\prod_{i=1}^{\infty}\left(\frac{p_i^k}{1+p_i+p_i^2+\cdots+p_i^k}\right)=0.$$
\qed

\medskip

\noindent{\bf Proof of Theorem \ref{density-theorem}:} Let $I$ be a finite subset of $\mathbb{N}$ and $p_i$ denote the $i$-th prime. Consider the sequence $(T_{n}(I))_{n\ge 1}\subset\operatorname{Spec}\psi^{\prime\prime}_k\big|_{\mathscr{C}}$, where
$$T_{n}(I)\coloneqq\psi^{\prime\prime}_k\big|_{\mathscr{C}}\left(\prod_{i\in I}\mathbb{Z}_{p_i^n}\right)=\psi^{\prime\prime}_k\left(\prod_{i\in I}\mathbb{Z}_{p_i^n}\right).$$
Then by Lemma \ref{cyclic-p-group-lemma} and Lemma \ref{product-quotient} ($2$),
$$\lim_{n\to\infty}T_{n}(I)=\lim_{n\to\infty}\psi^{\prime\prime}_k\left(\prod_{i\in I}\mathbb{Z}_{p_i^n}\right)=\lim_{n\to\infty}\left(\prod_{i\in I}\frac{p_i^{nk+n+k}(p_i-1)+p_i^k-1}{p_i^{n(k+1)}(p_i^{k+1}-1)}\right)=\prod_{i\in I}\left(\frac{p_i^k(p_i-1)}{p_i^{k+1}-1}\right).$$
Hence we have,
$$\mathcal{A}\subseteq\overline{\operatorname{Spec}\psi^{\prime\prime}_k\big|_{\mathscr{C}}}\subseteq\overline{\operatorname{Spec}\psi^{\prime\prime}_k}\subseteq[0,1],$$
where
$$\mathcal{A}\coloneqq\left\lbrace{\prod_{i\in I}\left(\frac{p_i^k(p_i-1)}{p_i^{k+1}-1}\right):I\subset\mathbb{N},|I|<\infty}\right\rbrace.$$
It suffices to show that $\mathcal{A}$ is dense in $[0,1]$, i.e., $\overline{\mathcal{A}}=[0,1]$.

Consider the sequence $(x_i)_{i\ge 1}\subset(0,\infty)$, where $x_i=\ln\left(\frac{p_i^{k+1}-1}{p_i^k(p_i-1)}\right)$. Hence we have, 
$$\lim_{i\to\infty}\frac{x_i}{1/p_i}=\lim_{i\to\infty}\frac{\ln\left(\frac{p_i^{k+1}-1}{p_i^k(p_i-1)}\right)}{1/p_i}=1.$$
Since $\sum_{i=1}^{\infty}\frac{1}{p_i}$ is divergent, by Lemma \ref{two-seq-same-nature-lemma}, $\sum_{i=1}^{\infty}x_i$ is also divergent. Also we have,
$$\lim_{i\to\infty}x_i=\lim_{i\to\infty}\ln\left(\frac{p_i^{k+1}-1}{p_i^k(p_i-1)}\right)=\ln 1=0.$$
Hence by Lemma \ref{seq-density-lemma}, the set containing the sums of all finite subsequences of $(x_i)_{i\ge 1}$ is dense in $[0,\infty)$. Consequently we have,
$$\overline{\left\lbrace{\sum_{i\in I}x_i:I\subset\mathbb{N}, |I|<\infty}\right\rbrace}=[0,\infty)$$ i.e.,
$$\overline{\left\lbrace{\sum_{i\in I}\ln\left(\frac{p_i^{k+1}-1}{p_i^k(p_i-1)}\right):I\subset\mathbb{N}, |I|<\infty}\right\rbrace}=[0,\infty)$$ i.e.,
$$\overline{\left\lbrace{\ln\left(\prod_{i\in I}\left(\frac{p_i^{k+1}-1}{p_i^k(p_i-1)}\right)\right):I\subset\mathbb{N}, |I|<\infty}\right\rbrace}=[0,\infty).$$
Since the function 
$$\exp:\mathbb{R}\to\mathbb{R}$$
$$x\mapsto e^x,\quad \forall x\in\mathbb{R}$$
is continuous and $[1,\infty)$ is a closed set of $\mathbb{R}$, we have $\overline{\mathcal{B}}=[1,\infty)$, where
$$\mathcal{B}\coloneqq\left\lbrace{\prod_{i\in I}\left(\frac{p_i^{k+1}-1}{p_i^k(p_i-1)}\right):I\subset\mathbb{N}, |I|<\infty}\right\rbrace.$$
Let $a\in (0,1]$. Then $\frac{1}{a}\in [1,\infty)$. Hence $\frac{1}{a}\in\overline{\mathcal{B}}$, i.e., there exists a sequence $(y_t)_{t\ge 1}\subset\mathcal{B}$ such that $\lim_{t\to\infty}y_t=\frac{1}{a}$. Thus $\lim_{t\to\infty}\frac{1}{y_t}=a$. Since $\left(\frac{1}{y_t}\right)_{t\ge 1}\subset\mathcal{A}$, $a\in\overline{\mathcal{A}}$. Thus $(0,1]\subseteq\overline{\mathcal{A}}$.

To show that $0\in\overline{\mathcal{A}}$, we take $I_n=[n]$. Then by Lemma \ref{prime-product-limit-zero-lemma}, we have $\lim_{n\to\infty}u_n=0$, where $u_n\coloneqq\prod_{i\in I_n}\left(\frac{p_i^k(p_i-1)}{p_i^{k+1}-1}\right)\in\mathcal{A}$. This completes the proof.\qed

\section{Comparison with other parameters}\label{comparison-section}
In this section, we compare the performance of the parameters $\psi'(G),\psi''(G)$ with the newly proposed measure $\psi''_k(G)$. We show that there exist groups whose nilpotency and (super)solvability cannot be guaranteed by the known thresholds of $\psi'(G)$ (Theorem \ref{thresholds-for-psi-prime}) and $\psi''(G)$ (Theorem \ref{thresholds-for-psi-double-prime}), but can be done by $\psi''_k(G)$ for some $k$.

\begin{example}
Consider the group $G=\mathbb{Z}_{2}\times \mathbb{Z}_2 \times D_8$ of order $64$. One can compute, either manually or by SageMath \cite{sage}, that $$\psi'(G)=\dfrac{239}{2731}<\dfrac{211}{1617};\quad  \psi''(G)=\dfrac{239}{4096}<\dfrac{211}{3600};$$ Thus, the solvability of $G$ is not guaranteed by these parameters. However, we see that $$\psi''_2(G)=\dfrac{1309}{262144}>\dfrac{841}{216000}.$$ Hence by Theorem \ref{thresholds-for-psi-double-prime}, we can say that $G$ is solvable. Similarly, the solvability of $A_4 \times S_3$ can not be derived from their respective values of $\psi'$ and $\psi''$. However $\psi_5''$ does the job.
\end{example}

\begin{example}
Consider the group $G=\mathbb{Z}_{4}\times \mathbb{Z}_4$ of order $16$. One can compute, either manually or by SageMath \cite{sage}, that $$\psi'(G)=\dfrac{55}{171}<\dfrac{31}{77};\quad  \psi''(G)=\dfrac{55}{256}<\dfrac{31}{144};$$ Thus, the supersolvability of $G$ is not guaranteed by these parameters. However, we see that $$\psi''_2(G)=\dfrac{205}{4096}>\dfrac{85}{1728}.$$ Hence by Theorem \ref{thresholds-for-psi-double-prime}, we can say that $G$ is supersolvable. Similarly, the supersolvability of $\mathbb{Z}_7 \rtimes \mathbb{Z}_3$ can not be derived from their respective values of $\psi'$ and $\psi''$. However $\psi_3''$ does the job.
\end{example}

\begin{example}
Consider the group $G=\mathbb{Z}_{4}\times \mathbb{Z}_2$ of order $8$. One can compute, either manually or by SageMath \cite{sage}, that $$\psi'(G)=\dfrac{23}{43}<\dfrac{13}{21};\quad  \psi''(G)=\dfrac{23}{64}<\dfrac{13}{36};$$ Thus, the nilpotency of $G$ is not guaranteed by these parameters. However, we see that $$\psi''_2(G)=\dfrac{77}{512}>\dfrac{31}{216}.$$ Hence by Theorem \ref{thresholds-for-psi-double-prime}, we can say that $G$ is nilpotent. Similarly, the nilpotency of $\mathbb{Z}_{52} \times \mathbb{Z}_2$ can not be derived from their respective values of $\psi'$ and $\psi''$. However $\psi_3''$ does the job.
\end{example}

\section*{Acknowledgement}
The first author is supported by the CSIR PhD fellowship (File no. $08/0155(26311)/2026-EMR-I$), Govt. of India. The second author acknowledges the funding of DST-FIST Sanction no. $SR/FST/MS-I/2019/41$, Govt. of India. The third author is supported by the funding of UGC [NTA Ref. No. 211610129182], Govt. of India.

\subsection*{Statements and Declarations}
The authors have no relevant financial or non-financial interests to disclose. The authors have no competing interests to declare that are relevant to the content of this article. The authors also declare that there is no associated data.

All authors contributed to the study conception and design. Material preparation and analysis were performed by Sayanta Chatterjee, Angsuman Das and Sauvik Poddar. The first draft of the manuscript was written by Angsuman Das and all the authors commented on previous versions of the manuscript. All authors read and approved the final manuscript.

\bibliographystyle{abbrv}
\bibliography{Ref}

\section*{Appendix}

\noindent\textbf{Lemma A.} Let $k$ be a positive integer and $n\ge 4$. Let $G$ be one of the groups mentioned in Lemma \ref{Huppert's-classification-lemma}. Then
\begin{equation}\label{appendix-A-eqn}
\psi^{\prime\prime}_k(G)\le\frac{1+2^k+6\cdot 4^k}{8^{k+1}}=\psi^{\prime\prime}_k(Q_8).
\end{equation}

\pf We only show for $G=SD_{2^n}$, where $n\ge 4$. Let
$$L(n,k)\coloneqq\psi^{\prime\prime}_k(SD_{2^n})=\frac{2^{n(k+1)-1} + 2^{n+3k-1} + 2^{n+2k-2} - 2^{n+k-2} + 2^k - 1}{2^{n(k+1)}(2^{k+1} - 1)},$$ 
and
$$R(k)=\psi^{\prime\prime}_k(Q_8)=\frac{1+2^k+6\cdot 4^k}{8^{k+1}}.$$
Then
\begin{align*}
    L(n+1,k)&=\frac{2^{(n+1)(k+1)-1} + 2^{n+3k} + 2^{n+2k-1} - 2^{n+k-1} + 2^k - 1}{2^{(n+1)(k+1)}(2^{k+1}-1)}\\
    &= \frac{2^{nk+n+k} + 2^{n+3k} + 2^{n+2k-1} - 2^{n+k-1} + 2^k - 1}{2^{k+1}2^{n(k+1)}(2^{k+1}-1)}.
\end{align*}
Hence,
\begin{align*}
    \frac{L(n+1,k)}{L(n,k)}
    &=\frac{2^{nk+n+k} + 2^{n+3k} + 2^{n+2k-1} - 2^{n+k-1} + 2^k - 1}{2^{k+1}(2^{n(k+1)-1} + 2^{n+3k-1} + 2^{n+2k-2} - 2^{n+k-2} + 2^k - 1)} \\
    &=\frac{2^{nk+n+k} + 2^{n+3k} + 2^{n+2k-1} - 2^{n+k-1} + 2^k - 1}{2^{k}(2^{nk+n} + 2^{n+3k} + 2^{n+2k-1} - 2^{n+k-1} + 2^{k+1} - 2)}\\
    &=\frac{(2^{nk+n} + 2^{n+3k} + 2^{n+2k-1} - 2^{n+k-1} + 2^{k+1} - 2)+(-2^k+1+2^{nk+n+k}-2^{nk+n})}{2^{k}(2^{nk+n} + 2^{n+3k} + 2^{n+2k-1} - 2^{n+k-1} + 2^{k+1} - 2)}\\
    &=\frac{1}{2^k} \cdot \left( 1 + \frac{-2^k+1+2^{nk+n+k}-2^{nk+n}}{2^{nk+n} + 2^{n+3k} + 2^{n+2k-1} - 2^{n+k-1} + 2^{k+1} - 2} \right)\\
    &=\frac{1}{2^k} \cdot \left( 1 + \frac{-2^k+1+2^{nk+n}(2^k-1)}{2^{nk+n} + 2^{n+3k} + 2^{n+k-1}(2^k-1) + 2(2^k - 1)} \right)\\
    &< \frac{1}{2^k} \cdot \left( 1 + \frac{(2^{nk+n}-1)\cdot(2^k-1)}{2^{nk+n}} \right)\\
    &<\frac{1}{2^k} \cdot \left( 1+ (2^k-1) \right)\\
    &=1.
\end{align*} i.e.,
$$L(n+1,k)<L(n,k) \quad \text{for} \quad n,k\ge 1.$$
We show that $L(4,k)\le R(k)$. We have
$$L(4,k)=\frac{2^{4k+3}+2^{3k+3}+2^{2k+2}-2^{k+2}+2^k-1}{2^{4k+4}(2^{k+1}-1)}.$$
Substituting $2^k = x$ yields
$$L(4,k)=\frac{8x^4+8x^3+4x^2-4x+x-1}{16x^4(2x-1)}=\frac{8x^4+8x^3+4x^2-3x-1}{16x^4(2x-1)}.$$
Rewriting $R(k)$ in terms of $x$ yields 
$$R(k)=\frac{1+2^k+6(2^k)^2}{8(2^k)^3}=\frac{6x^2+x+1}{8x^3}.$$
Since $k\ge 1$, $x\ge 2$. We prove that for all $x\ge 2$,
\begin{equation}\label{eqn-appendix-A}
\frac{8x^4+8x^3+4x^2-3x-1}{16x^4(2x-1)}\le\frac{6x^2+x+1}{8x^3}.
\end{equation}
\eqref{eqn-appendix-A} is equivalent to
\begin{align*}
8x^4+8x^3+4x^2-3x-1&\le (6x^2+x+1)(4x^2-2x)\\
&=24x^4-12x^3+4x^3-2x^2+4x^2-2x\\
&=24x^4 - 8x^3 + 2x^2 - 2x,
\end{align*}
which is equivalent to
$$0 \le (24x^4 - 8x^3 + 2x^2 - 2x) - (8x^4 + 8x^3 + 4x^2 - 3x - 1)=16x^4 - 16x^3 - 2x^2 + x + 1,$$
which clearly holds for $x\ge 2$. Hence \eqref{eqn-appendix-A} holds. Thus $L(4,k)\le R(k)$ holds. Hence 
$$L(n,k)\le R(k) \quad \text{for} \quad n\ge 4 \quad \text{and} \quad k\ge 1,$$
which proves \eqref{appendix-A-eqn}.\qed

\medskip

\noindent\textbf{Lemma B.} Let $k$ be a positive integer and $n\ge 2$. Then
\begin{equation}\label{appendix-B-eqn}
\psi^{\prime\prime}_k(\mathbb{Z}_{2^{n-1}}\times\mathbb{Z}_2)\le\frac{1+3\cdot 2^k}{4^{k+1}}=\psi^{\prime\prime}_k(\mathbb{Z}_2\times\mathbb{Z}_2).
\end{equation}

\pf Let
$$L(n,k)=\psi^{\prime\prime}_k(\mathbb{Z}_{2^{n-1}}\times\mathbb{Z}_2)= \frac{2^{n(k+1)}+2^{2k+1}-2^k-1}{2^{n(k+1)}(2^{k+1}-1)},$$
and
$$R(k)=\psi^{\prime\prime}_k(\mathbb{Z}_2\times\mathbb{Z}_2)=\frac{1+3\cdot 2^k}{4^{k+1}}.$$
Then
\begin{align*}
    L(n+1,k)&= \frac{2^{(n+1)(k+1)}+2^{2k+1}-2^k-1}{2^{(n+1)(k+1)}(2^{k+1}-1)}\\
    &= \frac{2^{(n+1)(k+1)}+2^{2k+1}-2^k-1}{2^{k+1}2^{n(k+1)}(2^{k+1}-1)}.
\end{align*}
Hence,
\begin{align*}
    \frac{L(n+1,k)}{L(n.k)}
    &=\frac{2^{nk+n+k+1}+2^{2k+1}-2^k-1}{2^{k+1}(2^{n(k+1)} + 2^{2k+1} - 2^k - 1)}\\
    &=\frac{1}{2^{k+1}} \cdot \left( \frac{(2^{n(k+1)} + 2^{2k+1} - 2^k - 1)+(2^{nk+n+k+1}-2^{nk+n})}{2^{n(k+1)} + 2^{2k+1} - 2^k - 1} \right)\\
    &=\frac{1}{2^{k+1}} \cdot \left(1+ \frac{2^{nk+n+k+1}-2^{nk+n}}{2^{n(k+1)} + 2^{2k+1} - 2^k - 1} \right)\\
    &=\frac{1}{2^{k+1}} \cdot \left(1+ \frac{2^{nk+n}(2^{k+1}-1)}{2^{nk+n} + 2^{2k+1} - 2^k - 1} \right)\\
    &<\frac{1}{2^{k+1}} \cdot \left(1+ (2^{k+1}-1) \right)\\
    &=1.
\end{align*}
i.e.,
$$L(n+1,k) < L(n,k) \quad \text{for} \quad n,k\ge 1.$$
We show that $L(2,k)\le R(k)$. We have
$$L(2,k)=\frac{2^{2(k+1)}+2^{2k+1}-2^k-1}{2^{2(k+1)}(2^{k+1}-1)}.$$
Substituting $2^k=x$ yields
$$L(2,k)=\frac{4x^2+2x^2-x-1}{4x^2(2x-1)}=\frac{6x^2-x-1}{4x^2(2x-1)}=\frac{(3x+1)(2x-1)}{4x^2(2x-1)}.$$
Since $k\ge 1$, $x\ge 2$ and thus
$$L(2,k)=\frac{3x + 1}{4x^2}.$$
Now
$$R(k)=\frac{1+3\cdot 2^k}{4^{k+1}}=\frac{1+3x}{4x^2}.$$
Thus $L(2,k)=R(k)$. Hence
$$L(n,k)\le L(2,k)=R(k) \quad \text{for} \quad n\ge 2 \quad \text{and} \quad k\ge 1,$$
which proves \eqref{appendix-B-eqn}.\qed

\medskip

\noindent\textbf{Lemma C.} Let $k$ be a positive integer. For $G= S_5,\operatorname{SL}(2,5)$ or $\operatorname{PSL}(2,7)$,
\begin{equation}\label{appendix-eqn}
\psi^{\prime\prime}_k(G)<\frac{1+15\cdot 2^k+20\cdot 3^k+24\cdot 5^k}{60^{k+1}}=\psi^{\prime\prime}_k(A_5).
\end{equation}

\pf We only show for $G=\operatorname{PSL}(2,7)$. Then (\ref{appendix-eqn}) is equivalent to showing 
$$\frac{1+21\cdot 2^k+56\cdot 3^k+42\cdot 4^k+48\cdot 7^k}{168^{k+1}}<\frac{1+15\cdot 2^k+20\cdot 3^k+24\cdot 5^k}{60^{k+1}},$$
i.e.,
$$5\cdot 5^k+105\cdot 10^k+280\cdot 15^k+210\cdot 20^k+240\cdot 35^k<14\cdot 14^k+210\cdot 28^k+280\cdot 42^k+336\cdot 70^k,$$
i.e.,
\begin{equation}\label{eqn-2}
\scriptsize{(14^{k+1}-5^{k+1})+(210\cdot 28^k-210\cdot 20^k)+(280\cdot 42^k-280\cdot 15^k)+(240\cdot 70^k-240\cdot 35^k)+(96\cdot 70^k-105\cdot 10^k)>0.}
\end{equation}
Clearly,
$$14^{k+1}-5^{k+1}>0,$$
$$210\cdot 28^k-210\cdot 20^k>0,$$
$$280\cdot 42^k-280\cdot 15^k>0,$$
$$240\cdot 70^k-240\cdot 35^k>0.$$
Also since $7^k>\frac{35}{32}$ for all $k\ge 1$, we have $96\cdot 70^k-105\cdot 10^k>0$. Hence (\ref{eqn-2}) holds.\qed

\end{document}